\patchcmd\Gread@eps{\@inputcheck#1 }{\@inputcheck"#1"\relax}{}{}
\newtheorem{theorem}{Theorem}[section]
\newtheorem{corollary}[theorem]{Corollary}
\newtheorem{lemma}[theorem]{Lemma}
\newtheorem{definition}[theorem]{Definition}
\newcommand{\qed}{\hfill $\square$\medskip}
\begin{document}
	
	\def\nt{\noindent}

	\title{$k$-Fair Coalitions in Graphs}
	
	\author{
		Abbas Jafari  \and   	
		Saeid Alikhani\thanks{corresponding author.}
				}

	
	\maketitle
	
	\begin{center}
		
		Department of Mathematical Sciences, Yazd University, 89195-741, Yazd, Iran	
		\medskip
		{\tt  abbasjafaryb91@gmail.com~~ alikhani@yazd.ac.ir ~~ }
	\end{center}
	
	\begin{abstract}
		Let $G = (V,E)$ be a simple graph. A subset $S \subseteq V$ is called a $k$-fair dominating set if every vertex not in $S$ has exactly $k$ neighbors in $S$. 
			Two disjoint sets $A, B \subseteq V$ form a $k$-fair coalition of $G$ if neither $A$ nor $B$ is a $k$-fair dominating set and  the union $A \cup B$ is a $k$-fair dominating set of $G$. A partition $\pi = \{V_1, V_2, \ldots, V_m\}$ of $V$ is called a $k$-fair coalition partition, if every set $V_i\in\pi$, either  $V_i$ is  a $k$-fair dominating set with exactly $k$ vertices, or $V_i$ is not a $k$-fair dominating set, but forms a $k$-fair coalition with some other set $V_j$ in $\pi$. The $k$-fair coalition number $C_{kf}(G)$ is the largest possible size of a $k$-fair coalition partition for $G$. 
		The objective of this study is to initiate an examination into the notion of $k$-fair coalitions in graphs and present essential findings. 
	\end{abstract}
	
	\noindent{\bf Keywords:}   coalition; coalition partition; $k$-fair coalition.

	\medskip
	\noindent{\bf AMS Subj.\ Class.:}  05C69, 05C85.

	\section{Introduction}

	Consider a simple graph $G = (V,E)$, where $V$ represents the vertex set and $E$ the edge set. The {open neighborhood} of a vertex $v$ in $V$ is defined as the set of adjacent vertices, denoted by $N(v)$, while the {closed neighborhood}, represented by $N[v]$, includes~$v$ itself. The { degree} of a vertex $v$, denoted by $deg(v)$, is the number of vertices in its open neighborhood. A vertex $v$ in $G$ is referred to as a {pendant vertex} if it has only one adjacent vertex in its open neighborhood, which is called its {support vertex}. An edge is considered {pendant} if one of its endpoints is a pendant vertex.  In a tree $T$, a vertex with degree one is called a {leaf}, while the adjacent vertex is referred to as the {support vertex}. 
	A vertex with degree $n-1$ in a graph $G$ with $n$ vertices is called a { full} or {universal vertex}, while a vertex with degree $0$ is an { isolate}. The minimum and maximum degrees of $G$ are represented by $\delta(G)$ and $\Delta(G)$, respectively.
	A subset $V_i \subseteq V$ is called {singleton} if it has only one element. 
	
		Within the context of a graph $G$, a subset $S$ of vertices is classified as a {dominating set} if for each vertex in $V \setminus S$, there exists at least one vertex in $S$ that is adjacent to it.
		The domination number $\gamma(G)$ of a
		graph $G$ is the minimum cardinality of a dominating set in $G$. The concept of the
		domination in graphs has been studied extensively and several research
		papers have been published on this topic. For a survey on this area, we
		refer the reader to \cite{10,11,12}.

		For $i \geq 1$, a $i$-fair dominating set ($iFD$-set) in $G$, is a dominating set $D$ such that $|N(v) \cap D|=i$ for every vertex $ v \in V\setminus D$.
	The $i$-fair domination number of $G$,  denoted by $\gamma{if}(G)$, is the minimum cardinality of a $iFD$-set. A $iFD$-set of $G$
	of cardinality $\gamma_{if}(G)$ is called a $\gamma_{if}(G)$-set. A fair dominating set, abbreviated
	FD-set, in $G$ is a $iFD$-set for some integer $k\geq 1$. The fair domination number,
	denoted by $\gamma_f(G)$, of a graph $G$ that is not the empty graph is the minimum
	cardinality of an FD-set in $G$. An FD-set of $G$ of cardinality $\gamma_f(G)$ is called a
$\gamma_f(G)$-set. 
	By convention, if $G=\overline{K_n}$, we define $\gamma_f(G)=n$. By the definition it is easy to see that for any graph $G$ of order $n$, $\gamma(G)\leq \gamma_f(G)\leq n$ and 
	$\gamma_f(G)=n$ if and only if $G=\overline{K_n}$.

Caro, Hansberg and Henning in \cite{Henning} showed that for a disconnected graph $G$ (without isolated vertices) of order $n\geq 3$, $\gamma_f(G)\leq n-2$, and they constructed 
	an infinite family of graphs achieving equality in this bound. 
	They also proved that if $T$ is a tree of order $n\geq 2$, then 
	$\gamma_f(T)\leq \frac{n}{2}$ with equality if and only if $T=T'\circ K_1$ for some tree $T'$.

	A {domatic partition} refers to a partition of a vertices of the graph into  dominating sets. Similarly, a {$k$-fair domatic partition} is a partition into $k$-fair dominating sets. The {$k$-fair domatic number} $d_{kf}(G)$ of a graph $G$ is the  size of a $k$-fair domatic partition with the largest size. On the other hand, the maximum size of a $k$-fair  domatic partition is denoted by $d_{kf}(G)$ and is known as the {$k$-fair domatic number}. The domatic number was first introduced by Cockayne and Hedetniemi in their seminal paper \cite{4}.  Further information on these concepts can be found in authoritative sources such as \cite{6,14,15,16}.

	In the seminal work \cite{7}, the concept of coalitions and coalition partitions was first introduced and subsequently explored in the field of graph theory, as evidenced by notable contributions such as \cite{2,3,8,9,DMHEN}. A { coalition}  in a graph $G$ is defined as the union of two disjoint sets of vertices $U_1$ and $U_2$, such that neither $U_1$ nor $U_2$ individually dominates $G$, but their union does. The sets $U_1$ and $U_2$ are referred to as {coalition partners}. On the other hand, a {coalition partition}  $\Phi=\{U_1, U_2, \ldots, U_k\}$  of $G$ is a partition of its vertex set where each $U_i$ in $\Phi$ is either a one-element dominating set of $G$ or a non-dominating set that forms a coalition with another non-dominating set $U_j \in \Phi$. The {coalition number} $C(G)$ of a graph $G$ is the maximum number of sets that can be present in a  coalition partition  of $G$. The concept of total coalition and independent coalition was introduced and explored in \cite{1,DMGT,DAMHEN}, while the coalition parameter for cubic graphs of order at most $10$ was investigated in \cite{2}.
	
	For every  coalition partition $\Phi$ of a graph $G$, there is a corresponding graph called the {coalition graph} of $G$ with respect to $\Phi$, denoted as $CG(G, \Phi)$. The vertices of this graph correspond one-to-one with the sets of $\Phi$, and two vertices are adjacent in $CG(G, \Phi)$ if and only if their corresponding sets form a coalition. The study of coalition graphs, particularly for paths, cycles, and trees, was conducted in \cite{8}. 
	
	Inspired by the concept of $k$-fair dominating sets \cite{Aust}, we introduce a new framework for coalitions based on fairness.
	
   \medskip
	
	In Section 2, we define the $k$-fair coalition model formally and discuss its fundamental properties. In Section 3, we establish the $k$-fair coalition number for several graph families, particularly focusing on regular graphs and graphs with pendant vertices. In Section 4, we investigate graphs with large $k$-fair coalition numbers. In Section 5, we present some results on the $k$-fair coalition number of cubic graphs. Finally, we conclude the paper in Section 6.

	\section{Introduction to $k$-fair coalition}
	
	In this section, we first state the definition of the $k$-fair  coalition and $k$-fair  coalition partition.
	
\begin{definition}
	Two disjoint sets $A, B \subseteq V$ form a {$k$-fair coalition} if
	neither $A$ nor $B$ is a $k$-fair dominating set, but the union $A \cup B$ is a $k$-fair dominating set. 
	A partition $\pi = \{V_1, V_2, \ldots, V_m\}$ of the vertex set of a graph $G$ is called a {$k$-fair coalition partition} if every set $V_i\in \pi$, either   $V_i$ is itself a $k$-fair dominating set with exactly $k$ vertices, or
	 $V_i$ is not a $k$-fair dominating set by itself, but forms a $k$-fair coalition with some other set $V_j$ in $\pi$ (where $j \neq i$).
		\end{definition}

\begin{definition}
	The {$k$-fair coalition number} of a graph $G$, denoted by $C_{kf}(G)$, is the maximum number of sets in a $k$-fair coalition partition of $G$. That is:
	\[
	C_{kf}(G) = \max \{ |\pi| : \pi \text{ is a $k$-fair coalition partition of } G. \}
	\]
\end{definition}

The following theorem gives a lower bound for the $k$-fair coalition number. 

\begin{theorem}\label{thm:1}
	If $G$ is a connected graph and $k \geq 2$, then $C_{kf}(G) \geq 2d_{kf}(G)$.
\end{theorem}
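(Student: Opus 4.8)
The plan is to build a large $k$-fair coalition partition by starting from an optimal $k$-fair domatic partition and splitting each of its classes into two pieces that are forced to be coalition partners. Concretely, let $\{D_1, D_2, \dots, D_d\}$ be a $k$-fair domatic partition of $G$ with $d = d_{kf}(G)$ classes, so each $D_i$ is a $k$-fair dominating set. I will refine this into a partition with $2d$ parts by replacing each $D_i$ with two nonempty subsets $A_i, B_i$ (with $A_i \cup B_i = D_i$) chosen so that neither $A_i$ nor $B_i$ is a $k$-fair dominating set; since $A_i \cup B_i = D_i$ \emph{is} one, the pair $A_i, B_i$ then forms a $k$-fair coalition, and the resulting partition $\pi = \{A_1, B_1, \dots, A_d, B_d\}$ is a $k$-fair coalition partition, giving $C_{kf}(G) \ge 2d = 2d_{kf}(G)$.

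The core step is the splitting, which I would do in the main case $d \ge 2$. When $d \ge 2$, each $D_i$ is a \emph{proper} subset of $V$, so there is a witness vertex $w_i \notin D_i$ with $|N(w_i) \cap D_i| = k$; in particular $|D_i| \ge k \ge 2$. I fix some $x_i \in N(w_i) \cap D_i$ and set $A_i = \{x_i\}$ and $B_i = D_i \setminus \{x_i\}$, both nonempty. Then $A_i$ is not $k$-fair dominating, because $w_i \notin A_i$ and $|N(w_i) \cap A_i| = 1 \ne k$; and $B_i$ is not $k$-fair dominating, because $w_i \notin B_i$ and $|N(w_i) \cap B_i| = k - 1 \ne k$. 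Since $A_i \cup B_i = D_i$ is $k$-fair dominating, $A_i$ and $B_i$ form a $k$-fair coalition in $G$. Collecting these over all $i$ yields the desired $k$-fair coalition partition $\pi$ with $|\pi| = 2d$.

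It remains to treat the degenerate case $d = 1$, where $D_1 = V$ and I need $C_{kf}(G) \ge 2$ directly. If $G$ is not $k$-regular, pick $v$ with $\deg(v) \ne k$; then $\{v\}$ and $V \setminus \{v\}$ are both non-$k$-fair-dominating (for $\{v\}$ because $k \ge 2$, for $V\setminus\{v\}$ because $v$ has $\deg(v) \ne k$ neighbors in it), so they form a $k$-fair coalition and $C_{kf}(G) \ge 2$. If $G$ is $k$-regular, then it cannot be bipartite (otherwise a bipartition would be a $k$-fair domatic partition, forcing $d_{kf}(G) \ge 2$), so for any edge $uv$ one checks that either $\{u,v\}$ and $V \setminus \{u,v\}$ form a $k$-fair coalition (using $k \ge 2$ and $|V| \ge k+1$), or $G = K_3$ and the three singletons form a $k$-fair coalition partition; in all cases $C_{kf}(G) \ge 2$. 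The main obstacle is precisely ensuring that \emph{both} halves of each split fail to be $k$-fair dominating: for $d \ge 2$ the witness vertex $w_i$ handles this cleanly (and this is the only place where the hypothesis $k \ge 2$ is used, to separate $1$ and $k-1$ from $k$), while for $d = 1$ it is the small, slightly fussy boundary analysis above.
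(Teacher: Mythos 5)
Your proof is correct, and while it shares the paper's overall strategy---take an optimal $k$-fair domatic partition $\{D_1,\dots,D_d\}$ and split each class into two parts that are forced to be coalition partners, yielding $2d$ classes---the mechanism you use to certify that both halves fail to be $k$-fair dominating is genuinely different. The paper first replaces each $D_i$ by a \emph{minimal} $k$-fair dominating subset (dumping the leftover vertices into the last class) and then argues that any nontrivial split of a minimal $k$-fair dominating set works, which forces a case analysis on the residual set $C_s''$ at the end; note also that moving leftover vertices into $C_s$ is delicate, since $k$-fair domination is not preserved under taking supersets. You instead exploit a witness vertex $w_i \notin D_i$, which necessarily sees exactly $k$ vertices of $D_i$, and peel off a single neighbor $x_i$ of $w_i$: then $w_i$ sees $1 \neq k$ vertices of $\{x_i\}$ and $k-1 \neq k$ vertices of $D_i \setminus \{x_i\}$, so both halves fail for free. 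This removes all the minimality machinery and the leftover bookkeeping, at the price of a separate argument when $d=1$ (where no external witness exists); your boundary analysis there---non-$k$-regular, $k$-regular non-$K_3$ via an edge $uv$, and $K_3$ itself---checks out, though the non-bipartiteness observation is not actually needed for the edge argument. Both proofs (and the theorem as stated) silently assume $G$ is nontrivial, since $K_1$ has $d_{kf}=1$ but admits no $k$-fair coalition partition for $k\geq 2$.
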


\begin{proof}
	Assume that $G$ has a $k$-fair domatic partition $\mathcal{C} = \{C_1, C_2, \ldots, C_s\}$ with $d_{kf}(G) = s$. Without loss of generality, assume that the sets $\{C_1, C_2, \ldots, C_{s-1}\}$ are minimal $k$-fair dominating sets. If any set $C_i$ is not minimal, we can find a subset $C'_i \subseteq C_i$ that is a minimal $k$-fair dominating set and add the remaining vertices to the set $C_s$.
		Since $k \geq 2$, no $C_i$ is a singleton set. If we split a minimal $k$-fair dominating set with more than one member into two non-empty subsets, we obtain two sets that are not $k$-fair dominating sets by themselves but form a $k$-fair coalition together. Therefore, we partition each non-singleton set $C_i$ into two subsets $C_{i,1}$ and $C_{i,2}$ that form a $k$-fair coalition. This operation results in a new partition $\mathcal{C}'$.
		Now consider the $k$-fair dominating set $C_s$. If $C_s$ is minimal, we split it into two non-$k$-fair dominating subsets and add them to $\mathcal{C}'$, resulting in a $k$-fair coalition partition of size at least $2s$. Since $s = d_{kf}(G)$, we have $C_{kf}(G) \geq 2d_{kf}(G)$.
	
	If $C_s$ is not minimal, we find a minimal subset $C'_s \subseteq C_s$ and partition it into two non-$k$-fair dominating sets $C'_{s,1}$ and $C'_{s,2}$. We then examine the set $C''_s = C_s \setminus C'_s$:
	\begin{itemize}
		\item If $C''_s$ forms a $k$-fair coalition with any set in $\mathcal{C}'$, then by adding it to $\mathcal{C}'$, we obtain a partition of size at least $2s+1$, meaning $C_{kf}(G) \geq 2d_{kf}(G) + 1$.
		\item Otherwise, by replacing $C'_{s,2}$ with $C'_{s,2} \cup C''_s$ in $\mathcal{C}'$, we obtain a partition of size at least $2s$.
	\end{itemize}
		In both cases, we conclude that $C_{kf}(G) \geq 2d_{kf}(G)$, which completes the proof.\qed
\end{proof}

\medskip
The following corollary is an instant consequence of Theorem \ref{thm:1}. 
\begin{corollary}
	For even $k$, $C_{kf}(G) \geq d_{(k/2)f}(G)$.
\end{corollary}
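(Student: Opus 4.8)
The plan is to produce, directly, a $k$-fair coalition partition of $G$ of size $d_{(k/2)f}(G)$, reusing the core idea behind Theorem~\ref{thm:1} that merging two suitable vertex classes yields a $k$-fair dominating set. Write $\ell = k/2$, so $\ell \ge 1$, and let $\mathcal{C} = \{C_1, C_2, \ldots, C_s\}$ be an $\ell$-fair domatic partition of $G$ of maximum size, so $s = d_{\ell f}(G)$. I may assume $s \ge 2$, the case $s \le 1$ being trivial.

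The first step is the key structural observation: for any two distinct indices $i$ and $j$, the union $C_i \cup C_j$ is a $k$-fair dominating set. Indeed, a vertex $v \notin C_i \cup C_j$ lies outside both classes, so $|N(v) \cap C_i| = \ell$ and $|N(v) \cap C_j| = \ell$, and since $C_i \cap C_j = \emptyset$ this gives $|N(v) \cap (C_i \cup C_j)| = 2\ell = k$; as $k \ge 1$, such a set is in particular dominating. This plays exactly the role that ``splitting a minimal $k$-fair dominating set into two coalition partners'' plays in the proof of Theorem~\ref{thm:1}, except that here the classes $C_i$ already serve as the partners and no splitting is needed.

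The second step is to check that no single $C_i$ is itself a $k$-fair dominating set: since $s \ge 2$, the set $V \setminus C_i$ contains some $C_j$ and so is nonempty, and every $v \in V \setminus C_i$ satisfies $|N(v) \cap C_i| = \ell = k/2 \ne k$ because $k \ge 2$. Combining the two steps, each $C_i \in \mathcal{C}$ is not a $k$-fair dominating set yet forms a $k$-fair coalition with every other $C_j$; hence $\mathcal{C}$ is itself a $k$-fair coalition partition of $G$, and therefore $C_{kf}(G) \ge s = d_{(k/2)f}(G)$.

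I do not expect a genuine obstacle here — both steps are one-line degree counts. The only point requiring care is to avoid deriving the bound by first collapsing $\mathcal{C}$ into a $k$-fair domatic partition through pairwise unions and then quoting Theorem~\ref{thm:1} as a black box: when $s$ is odd, that pairing leaves one class unmatched and produces only $\lfloor s/2 \rfloor$ $k$-fair dominating classes, so Theorem~\ref{thm:1} would give merely $C_{kf}(G) \ge 2\lfloor s/2 \rfloor = s - 1$. Keeping $C_1, \ldots, C_s$ intact is precisely what sharpens the bound to $d_{(k/2)f}(G)$.
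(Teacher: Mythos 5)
Your argument is correct, but it is not the derivation the paper has in mind: the paper offers no proof at all and simply presents the bound as an ``instant consequence'' of Theorem~\ref{thm:1}, i.e.\ of pairing up the classes of a $(k/2)$-fair domatic partition into $k$-fair dominating sets and then invoking $C_{kf}(G)\ge 2d_{kf}(G)$. Your direct route --- observing that a maximum $(k/2)$-fair domatic partition $\{C_1,\dots,C_s\}$ is already a $k$-fair coalition partition, since no $C_i$ is $k$-fair dominating (an outside vertex sees exactly $k/2\neq k$ neighbours in it) while every union $C_i\cup C_j$ is --- is cleaner and, as you yourself point out, actually necessary to reach the stated bound: the pairing route yields only $\lfloor s/2\rfloor$ classes when $s$ is odd (and the leftover class cannot be absorbed into a pair without destroying $k$-fairness, so even $d_{kf}(G)\ge\lfloor s/2\rfloor$ needs care), whence Theorem~\ref{thm:1} quoted verbatim gives only $C_{kf}(G)\ge s-1$. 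Your version also dispenses with the connectivity hypothesis that Theorem~\ref{thm:1} carries but the corollary does not state. The one point you wave away, $s\le 1$, is not entirely innocent --- $C_{kf}(G)\ge 1$ presupposes that $G$ admits some $k$-fair coalition partition at all, which can fail under the paper's definitions (e.g.\ for a star with $k$ large) --- but the paper is equally silent on this degenerate case, so it is a defect of the corollary's statement rather than of your argument.
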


\begin{lemma}\label{lem:5-2}
	For any graph $G$ and any $k$-fair coalition partition $\mathcal{C}$ of $G$, each set in $\mathcal{C}$ can form a $k$-fair coalition with at most $\Delta(G)-k+2$ other sets in $\mathcal{C}$.
\end{lemma}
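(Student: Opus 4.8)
The plan is to fix an arbitrary set $V_i \in \mathcal{C}$ and bound the number of its coalition partners by analyzing a single ``witness'' vertex that certifies $V_i$ is not a $k$-fair dominating set. If $V_i$ is itself a $k$-fair dominating set (necessarily with exactly $k$ vertices), then by definition it forms no $k$-fair coalition at all and there is nothing to prove, so we may assume $V_i$ is not a $k$-fair dominating set. Then there is a vertex $v \notin V_i$ with $|N(v) \cap V_i| \neq k$, and I would split into two cases according to whether $v$ has too many or too few neighbors in $V_i$.

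First, if $|N(v) \cap V_i| > k$, then for any set $V_j$ disjoint from $V_i$ with $v \notin V_j$ we have $|N(v) \cap (V_i \cup V_j)| \geq |N(v) \cap V_i| > k$, so $V_i \cup V_j$ cannot be a $k$-fair dominating set. Hence every coalition partner of $V_i$ must contain $v$, and since $\mathcal{C}$ is a partition at most one set does; this gives at most one partner, well within the claimed bound. The main case is $|N(v) \cap V_i| = k - r$ with $r \geq 1$. Set $M = N(v) \setminus V_i$, so that $|M| = \deg(v) - (k-r) \leq \Delta(G) - k + r$. If $V_j$ is a coalition partner of $V_i$ with $v \notin V_j$, then $v \notin V_i \cup V_j$, so $k$-fair domination of $V_i \cup V_j$ forces $|N(v) \cap (V_i \cup V_j)| = k$, i.e.\ $|N(v) \cap V_j| = r$, i.e.\ $|M \cap V_j| = r$. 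Since the members of $\mathcal{C}$ are pairwise disjoint, the sets $M \cap V_j$ are pairwise disjoint subsets of $M$, each of size exactly $r$ for such a partner; hence there are at most $\lfloor |M|/r \rfloor$ of them. A short estimate then gives $\lfloor |M|/r \rfloor \leq (\Delta(G)-k+r)/r \leq \Delta(G)-k+1$, where the last inequality is equivalent to $(r-1)(\Delta(G)-k) \geq 0$ and so holds for $r \geq 1$; adding the at most one partner that does contain $v$ yields the bound $\Delta(G) - k + 2$.

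The only real obstacle is the arithmetic in the main case when $r \geq 2$: one must use the floor together with $\Delta(G) \geq k$ to absorb the factor $1/r$, since the naive estimate $|M| \leq \Delta(G) - k + r$ is too weak by itself. One should also take a little care to verify that the single set possibly containing $v$ is counted exactly once across the two (sub)cases, so that no partner is double-counted and none is overlooked; everything else is a routine book-keeping argument built on the defining property of a $k$-fair dominating set.
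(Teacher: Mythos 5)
Your proof is correct and follows essentially the same route as the paper's: pick a witness vertex $v$ with $|N(v)\cap V_i|\neq k$, dispose of the case $|N(v)\cap V_i|>k$ by noting that only the block containing $v$ can be a partner, and in the remaining case count the partners avoiding $v$ by how many neighbours of $v$ each must supply. If anything your Case 2 is more careful than the paper's, which simply asserts that the worst case occurs when the set already contains $k-1$ neighbours of the witness; your estimate $\lfloor |M|/r\rfloor\le \Delta(G)-k+1$ (valid since $\Delta(G)\ge k$ is forced whenever a partner avoiding $v$ exists, and the count is trivially $1$ otherwise) actually justifies that assertion.
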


\begin{proof}
	Let $v$ be a vertex in graph $G$ and let $S \in \mathcal{C}$ be a set such that $v \in S$. If $S$ is a $k$-fair dominating set, then by definition it does not form a $k$-fair coalition with any other set in $\mathcal{C}$, which confirms the result.
		Now suppose $S$ is not a $k$-fair dominating set. In this case, there exists a vertex $x \notin S$ that is not fairly $k$-dominated by $S$, i.e., $|N(x) \cap S| \neq k$.
		We consider two cases:
	
	\noindent{Case 1:} $|N(x) \cap S| > k$.\\
	In this case, $S$ can only form a $k$-fair coalition with the set that contains $x$.
	
\noindent{Case 2:} $|N(x) \cap S| < k$.\\
	For any set $A \in \mathcal{C}$ that does not contain $x$ and forms a $k$-fair coalition with $S$, a necessary condition is that $A \cup S$ contains exactly $k$ neighbors of $x$.
		To maximize the number of sets that form a $k$-fair coalition with $S$, the set $S$ must contain at most $k-1$ neighbors of $x$. This ensures that the remaining neighbors of $x$ are covered by other coalition partners of $S$. Therefore, in the worst case, $S$ can form a $k$-fair coalition with at most $|N(x)|-(k-1)$ subsets that do not contain $x$, or with the subset containing $x$.
		Thus, the number of $k$-fair coalition partners of $S$ is equal to:
	\[
	1+|N(x)|-(k-1) \leq \Delta(G) - k + 2,
	\]
	where the number $1$ refers to the set containing $x$.
		This  completes the proof.\qed
\end{proof}

\medskip

In the following theorem, we provide an upper bound for the $k$-fair coalition number based on maximum degree and $k$.
\begin{theorem}\label{thm:5-6}
	For any graph $G$ with maximum degree $\Delta(G)$ and any integer $k>\delta(G)$, we have:
	\[
	C_{kf}(G) \leq \Delta(G) - k + 3.
	\]
\end{theorem}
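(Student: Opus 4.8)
The plan is to leverage Lemma~\ref{lem:5-2} together with the hypothesis $k > \delta(G)$ to pin down a vertex of minimum degree whose containing set cannot be a $k$-fair dominating set, and then count coalition partners carefully.

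First I would fix a $k$-fair coalition partition $\mathcal{C}$ of $G$ realizing $C_{kf}(G)$, and pick a vertex $v$ with $\deg(v) = \delta(G)$. Let $S \in \mathcal{C}$ be the set containing $v$. The key observation is that $S$ cannot be a $k$-fair dominating set: if it were, then since $v \notin S$ is false (we have $v \in S$), I instead argue via any vertex of $V \setminus S$; but the cleaner route is to note that \emph{no} set in $\mathcal{C}$ containing a minimum-degree vertex can be a $k$-fair dominating set with exactly $k$ vertices unless $S = V$ forces a contradiction --- actually the crucial point is simpler: consider the set $S$; every vertex $u \notin S$ needs exactly $k$ neighbours in $S$, but $v \in S$ has only $\delta(G) < k$ neighbours total, so $v$ lies in $S$ harmlessly, and the obstruction must come from elsewhere. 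So instead I would argue: since every $V_i \in \mathcal{C}$ is either a $k$-fair dominating set of size exactly $k$ or forms a $k$-fair coalition with another member, and since $G$ is connected with $k > \delta(G)$, a $k$-fair dominating set of size exactly $k$ would force every outside vertex to have all $k$ of its neighbours inside, contradicting $\deg(v) = \delta(G) < k$ for the minimum-degree vertex $v$ if $v$ is outside that set; hence every member of $\mathcal{C}$ must form a coalition, so in particular each member has at least one coalition partner.

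Next, the heart of the argument: I would show that the set $S$ containing $v$ forms a $k$-fair coalition with \emph{every} other set in $\mathcal{C}$, or more precisely that the total number of sets is bounded. Since $v \in S$ and $\deg(v) = \delta(G) < k$, for any set $T \in \mathcal{C}$ disjoint from $S$, the union $S \cup T$ still has $|N(v) \cap (S \cup T)| \le \deg(v) = \delta(G) < k$, so $v$ is never fairly $k$-dominated by $S \cup T$ --- but $v \in S \cup T$ so that is not required. The real content is to apply Lemma~\ref{lem:5-2} to the set $S$: every set in $\mathcal{C} \setminus \{S\}$ that forms a coalition with something does so with at most $\Delta(G) - k + 2$ partners. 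Combining this with the fact that every set must have a partner, and that $S$ itself has at most $\Delta(G) - k + 2$ partners, I would count: the $\Delta(G) - k + 2$ partners of $S$, plus $S$ itself, give $\Delta(G) - k + 3$ sets; I must then argue no set outside this collection can exist, because any such set would still need a partner, and a careful look at which vertices it could help dominate (using the minimum-degree vertex as the bottleneck) shows it would have to be among the partners of $S$ already counted.

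The main obstacle I anticipate is the last step --- ruling out sets in $\mathcal{C}$ that are neither $S$ nor partners of $S$. The cleanest way is probably to show directly that $S$ must form a coalition with every other set in $\mathcal{C}$: pick the minimum-degree vertex $v \in S$; since $k > \delta(G)$, no single member of $\mathcal{C} \setminus \{S\}$ can be a $k$-fair dominating set of size exactly $k$ (as $v$ would need exactly $k$ neighbours there, impossible), so every member $T \ne S$ must form a coalition, and the witness vertex $x$ showing $T$ is not $k$-fair dominating, together with the structure, forces $T$'s only available coalition partner to supply missing neighbours of $x$ --- and I would argue $S$ is forced into this role, so $|\mathcal{C}| - 1 \le \Delta(G) - k + 2$ by Lemma~\ref{lem:5-2} applied to $S$, giving the bound. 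If that global claim is too strong, the fallback is to bound $\deg$-classes separately, but I expect the connectivity plus $k > \delta(G)$ hypothesis is exactly what makes the direct argument go through.
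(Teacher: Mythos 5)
Your overall strategy is the same as the paper's: fix a minimum-degree vertex $v$, let $S$ be the block of the partition containing it, argue that $S$ is the only possible coalition partner for every other block, and finish by applying Lemma~\ref{lem:5-2} to $S$ to get $|\mathcal{C}|\leq 1+(\Delta(G)-k+2)$. However, the step you yourself flag as the heart of the argument --- that every block other than $S$ must have $S$ as its partner --- is exactly the step you never establish, and the mechanism you sketch for it points at the wrong vertex. You try to force $S$ into the partner role via the witness vertex $x$ of an arbitrary block $T$ (``$T$'s only available coalition partner must supply missing neighbours of $x$''); that argument does not close, since a priori several different blocks could supply the missing neighbours of $x$. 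The correct and much shorter argument, which is what the paper uses, runs through $v$ itself: if $A,B\in\mathcal{C}$ with $A\neq S$, $B\neq S$ formed a $k$-fair coalition, then $A\cup B$ would be a $k$-fair dominating set with $v\notin A\cup B$, so $v$ would need exactly $k$ neighbours in $A\cup B$; this is impossible because $|N(v)|=\delta(G)<k$. The same observation rules out any block other than $S$ being a $k$-fair dominating set of size $k$. Hence every block of $\mathcal{C}\setminus\{S\}$ partners only with $S$, and Lemma~\ref{lem:5-2} gives the bound.

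Two smaller points. First, your opening paragraph spends considerable effort on whether $S$ itself can be a $k$-fair dominating set; this is irrelevant to the count (if it is, it simply has no partners, and the partition could not have any other blocks needing partners by the argument above), and the appeal to connectivity is unnecessary --- the theorem carries no connectivity hypothesis. Second, your aside computing $|N(v)\cap(S\cup T)|$ for $T$ disjoint from $S$ is, as you note yourself, vacuous since $v\in S\cup T$; the useful computation is the one for unions of two blocks \emph{avoiding} $v$. With the displayed contradiction inserted, your proof closes and coincides with the paper's.
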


\begin{proof}
	Let $x$ be a vertex of $G$ with  $\deg(x) = \delta(G)$. Also, let $\mathcal{C}$ be a $k$-fair coalition partition of $G$ with size $C_{kf}(G)$. Consider the set $X \in \mathcal{C}$ such that $x \in X$.
	
	\noindent{Case 1:} If $N(x) \subseteq X$, then each set in $\mathcal{C} \setminus \{X\}$ can only form a $k$-fair coalition with $X$. Therefore, by Lemma \ref{lem:5-2} we have:
	\[
	C_{kf}(G) \leq 1 + (\Delta(G) - k + 2) = \Delta(G) - k + 3.
	\]
	
	\noindent{Case 2:} If $N(x) \not\subseteq X$, consider two distinct sets $A \neq X$ and $B \neq X$ from $\mathcal{C}$. If $A$ and $B$ form a $k$-fair coalition, then $A \cup B$ must be a $k$-fair dominating set. However, since $x \notin A \cup B$ and $\deg(x) = \delta(G) < k$, this creates a contradiction because $x$ must have exactly $k$ neighbors in $A \cup B$. Therefore, each set in $\mathcal{C}$ can only form a $k$-fair coalition with $X$. Consequently, by Lemma \ref{lem:5-2} we have:
	\[
	C_{kf}(G) \leq (\Delta(G) - k + 2) + 1 = \Delta(G) - k + 3. 
	\]\qed
\end{proof}

We need the following theorem:

\begin{theorem}{\rm\cite{Aust}}
	\label{lthm}
	For any graph $G$ with $\Delta(G)\geq \delta(G)+1$, $$C_{\delta(G)}(G)\le 2\Delta(G)-2\delta(G)+4.$$
\end{theorem}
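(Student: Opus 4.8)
The plan is to argue through a vertex of minimum degree. Set $k=\delta(G)$, let $\mathcal{C}$ be a $k$-fair coalition partition with $|\mathcal{C}|=C_{kf}(G)$, fix a vertex $x$ with $\deg(x)=\delta(G)=k$, and let $X\in\mathcal{C}$ be the part containing $x$. The key observation is: if $\{A,B\}$ is a coalition with $X\notin\{A,B\}$, then $A\cup B$ is a $k$-fair dominating set and $x\notin A\cup B$, so $|N(x)\cap(A\cup B)|=k=|N(x)|$ and hence $N(x)\subseteq A\cup B$; since the parts of $\mathcal{C}$ are pairwise disjoint, every part meeting $N(x)$ other than $X$ itself must be $A$ or $B$. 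Let $t$ be the number of parts of $\mathcal{C}$ different from $X$ that meet $N(x)$, so $t\le|N(x)|=\delta(G)$. I would also record the elementary fact that a $k$-fair dominating set $D$ with $|D|=k$ satisfies $D\subseteq N(v)$ for every $v\notin D$; hence the only such set $D$ with $x\notin D$ is $D=N(x)$, which is a part of $\mathcal{C}$ only when $t\le 1$. Consequently the parts of $\mathcal{C}$ that are $k$-fair dominating sets of size $k$ (the isolated vertices of the coalition graph) lie in $\{X\}$ when $t\ge 2$, and in $\{X,Y_1\}$ when $t=1$, where $Y_1\supseteq N(x)$ denotes the unique part meeting $N(x)$.

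Next I would split into cases, using Lemma~\ref{lem:5-2} throughout (every part has at most $\Delta(G)-k+2$ coalition partners) and writing $P_S$ for the partner set of $S$. If $N(x)\cap X\neq\emptyset$ or $t\ge 3$, then no coalition avoids $X$, so each part of $\mathcal{C}$ either forms a coalition with $X$ or is a $k$-fair dominating set of size $k$; the latter forces that part to be $X$, and if $X$ is such a part then $\mathcal{C}=\{X\}$. Hence $C_{kf}(G)\le 1+(\Delta(G)-k+2)=\Delta(G)-\delta(G)+3\le 2\Delta(G)-2\delta(G)+4$ since $\Delta(G)\ge\delta(G)+1$. If $t=2$ with $N(x)\cap X=\emptyset$, say $N(x)$ meets $Y_1,Y_2$, then $\{Y_1,Y_2\}$ is the only coalition avoiding $X$, so every part outside $\{X,Y_1,Y_2\}$ is a partner of $X$, the isolated parts lie in $\{X\}$, and $C_{kf}(G)\le 3+(\Delta(G)-k+2)=\Delta(G)-\delta(G)+5\le 2\Delta(G)-2\delta(G)+4$. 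If $t=1$ with $N(x)\subseteq Y_1\neq X$, then every coalition involves $X$ or $Y_1$, and there are no isolated parts outside $\{X,Y_1\}$; if $X$ or $Y_1$ is itself a $k$-fair dominating set of size $k$, the corresponding $P_S$ is empty and the estimate $|\mathcal{C}|\le 1+(\Delta(G)-k+2)+1=\Delta(G)-\delta(G)+4$ applies, while otherwise $\mathcal{C}=\{X,Y_1\}\cup P_X\cup P_{Y_1}$ with $P_X,P_{Y_1}\subseteq\mathcal{C}\setminus\{X,Y_1\}$ or containing each other. When $X$ and $Y_1$ form a coalition, $\{X,Y_1\}\subseteq P_X\cup P_{Y_1}$, so $C_{kf}(G)=|P_X\cup P_{Y_1}|\le 2(\Delta(G)-k+2)=2\Delta(G)-2\delta(G)+4$.

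The only remaining case — and the genuine obstacle — is $t=1$ with $X$ and $Y_1$ \emph{not} forming a coalition (and neither being a $k$-fair dominating set of size $k$), where the naive bound $|\mathcal{C}|\le 2+|P_X|+|P_{Y_1}|\le 2\Delta(G)-2\delta(G)+6$ overshoots by two. My plan is to revisit the proof of Lemma~\ref{lem:5-2} quantitatively. If some $S\in\{X,Y_1\}$ has a vertex $w\notin S$ with $|N(w)\cap S|>k$, then $S$ has at most one partner, which gives ample slack. Otherwise, for any partner $A\neq Y_1$ of $X$ and any $y\in N(x)\subseteq Y_1$ we have $y\notin X\cup A$, so $|N(y)\cap(X\cup A)|=k$ and thus $|N(y)\cap A|=k-|N(y)\cap X|$ is the same for all such $A$; if this common value is positive, then disjointness of the sets $N(y)\cap A$ gives $|P_X|\le(\deg(y)-|N(y)\cap X|)/(k-|N(y)\cap X|)\le\Delta(G)-k+1$, where $\Delta(G)\ge\delta(G)+1$ is used to dispose of the subcase $|N(y)\cap X|\le k-2$. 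Hence $|P_X|=\Delta(G)-\delta(G)+2$ forces every neighbour of $x$ to have exactly $k$ neighbours in $X$, and an analogous rigidity must hold for $Y_1$; the final step is to show that this rigidity, combined with the hypothesis that $X\cup Y_1$ is not $k$-fair dominating, yields a common partner (so $|P_X\cap P_{Y_1}|\ge 1$) or an outright contradiction, producing $|P_X|+|P_{Y_1}|-|P_X\cap P_{Y_1}|\le 2\Delta(G)-2\delta(G)+2$ and hence $|\mathcal{C}|\le 2\Delta(G)-2\delta(G)+4$. I expect this last rigidity-versus-coalition step to be the technical heart of the argument, the earlier cases being routine consequences of Lemma~\ref{lem:5-2}.
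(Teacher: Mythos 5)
First, a point of reference: the paper does not prove Theorem~\ref{lthm} at all --- it is quoted verbatim from \cite{Aust} as an external result --- so there is no in-paper proof to compare yours against. More importantly, your argument is pitched at the wrong invariant. The statement bounds $C_{\delta(G)}(G)$, the $\delta(G)$-\emph{coalition} number built on $k$-domination (``at least $k$ neighbours''), whereas you work throughout with $k$-\emph{fair} coalition partitions (``exactly $k$ neighbours'') and with $C_{kf}(G)$, invoking Lemma~\ref{lem:5-2}, which is stated and proved only in the fair setting. In effect you are attempting the Corollary that follows the theorem, not the theorem itself. For a minimum-degree vertex $x$ the two settings happen to agree on your key observation ($|N(x)\cap D|\ge k=\deg(x)$ forces $N(x)\subseteq D$ just as $|N(x)\cap D|=k$ does), but everything built on Lemma~\ref{lem:5-2} and on the ``exactly $k$'' rigidity would have to be re-derived for $k$-domination before it could count as a proof of the stated theorem.

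Even judged as a proof of the fair version, there is a genuine gap at exactly the point you identify as the heart of the matter: the case $t=1$ with $X$ and $Y_1$ not forming a coalition and neither being a $k$-fair dominating set of size $k$. There the routine count gives $|\mathcal{C}|\le 2+|P_X|+|P_{Y_1}|\le 2\Delta(G)-2\delta(G)+6$, and the entire content of the theorem is the saving of two. Your proposed repair --- that $|P_X|=\Delta(G)-\delta(G)+2$ forces every $y\in N(x)$ to have exactly $k$ neighbours in $X$, that ``an analogous rigidity must hold for $Y_1$,'' and that these rigidities together with $X\cup Y_1$ not being $k$-fair dominating ``yield a common partner or an outright contradiction'' --- is a plan, not an argument. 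The analogous rigidity for $Y_1$ is not even formulated (the vertices of $N(x)$ lie inside $Y_1$, so they impose no fairness constraint on $Y_1$'s coalitions, and you would need a different witness vertex), and no mechanism is given for producing the common partner. Until that case is closed, the bound is not established; the earlier cases are indeed routine consequences of the partner bound, but they are not where the theorem lives.
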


Since every $k$-fair dominating partition is a $k$-fair coalition partition, by Theorem \ref{lthm}, we have the following result:

\begin{corollary}
	For any graph \( G \) with \( \Delta(G) \geq \delta(G) + 1 \), we have:
	\[
	C_{\delta(G)f}(G) \leq 2\Delta(G) - 2\delta(G) + 4.
	\]
\end{corollary}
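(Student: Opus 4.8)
\medskip
\noindent\textbf{Proof plan.} This corollary is precisely the case $k=\delta(G)$ that Theorem~\ref{thm:5-6} leaves open: the proof of Theorem~\ref{thm:5-6} invokes $\deg(x)=\delta(G)<k$, so it genuinely needs $k>\delta(G)$. The plan is therefore not to argue afresh but to transfer Theorem~\ref{lthm} of \cite{Aust}; since the two right-hand sides already agree, the whole task reduces to establishing $C_{\delta(G)f}(G)\le C_{\delta(G)}(G)$, i.e.\ that every $\delta(G)$-fair coalition partition of $G$ is an admissible partition for the parameter appearing in Theorem~\ref{lthm}. (If $C_{\delta(G)}(\cdot)$ in \cite{Aust} is literally the $\delta(G)$-fair coalition number written without the subscript $f$, then there is nothing to prove and the corollary is a restatement of Theorem~\ref{lthm}.)

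The bridge for that reduction is the elementary implication ``$\delta(G)$-fair dominating $\Rightarrow$ $\delta(G)$-dominating'' (having exactly $\delta(G)$ neighbours inside forces having at least $\delta(G)$). Given a $\delta(G)$-fair coalition partition $\pi=\{V_1,\dots,V_m\}$, a part that is a $\delta(G)$-fair dominating set with exactly $\delta(G)$ vertices is then a $\delta(G)$-dominating set of minimum possible size, and for a coalition pair $\{V_i,V_j\}$ the set $V_i\cup V_j$ is $\delta(G)$-dominating; checking, clause by clause, the membership conditions of the partitions underlying Theorem~\ref{lthm} should show that $\pi$ certifies at least $m$ parts there as well.

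The step I expect to be the real obstacle is the coalition pairs, specifically a part $V_i$ that is a $\delta(G)$-dominating set but \emph{not} a $\delta(G)$-fair dominating set: such a $V_i$ forms a $\delta(G)$-fair coalition in $\pi$ yet, being already dominating, cannot play the coalition role in the non-fair model, which would wreck the embedding. To exclude it I would chase the consequences of its partner: if $\{V_i,V_j\}$ is a $\delta(G)$-fair coalition and $V_i$ already $\delta(G)$-dominates $G$, then every vertex outside $V_i\cup V_j$ has exactly $\delta(G)$ neighbours in $V_i$ and none in $V_j$, so every witness to the non-fairness of $V_i$ must lie inside $V_j$; feeding this back, together with $\Delta(G)\ge\delta(G)+1$ and the fact that $\pi$ partitions all of $V$, should force a contradiction. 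If a clean contradiction does not materialise, the argument would have to be supplemented (for instance by a counting estimate in the spirit of Lemma~\ref{lem:5-2} around a minimum-degree vertex and its coalition partner, which is presumably the mechanism behind Theorem~\ref{lthm} itself), but the contradiction route is the one I would push first. Once such parts are ruled out, $C_{\delta(G)f}(G)\le C_{\delta(G)}(G)$ follows and Theorem~\ref{lthm} closes the proof.
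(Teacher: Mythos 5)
Your overall route coincides with the paper's: the paper obtains this corollary from Theorem~\ref{lthm} in a single sentence, implicitly relying on exactly the transfer $C_{\delta(G)f}(G)\le C_{\delta(G)}(G)$ that you set out to justify. To your credit, you have located the precise weak point of that transfer --- a part that is a $\delta(G)$-dominating set but not a $\delta(G)$-fair dominating set satisfies neither clause of a $\delta(G)$-coalition partition --- which the paper's stated justification does not address at all. But your plan for closing that hole is the problem.

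The gap is that the contradiction you propose to derive does not exist, so the ``contradiction route'' you say you would push first is a dead end rather than a deferred computation. Take $G=P_4$ with vertex set $\{1,2,3,4\}$ and edges $12$, $23$, $34$, so that $\delta(G)=1$ and $\Delta(G)=2=\delta(G)+1$. The partition $\pi=\bigl\{\{1,2,4\},\{3\}\bigr\}$ is a valid $1$-fair coalition partition: neither part is a $1$-fair dominating set (vertex $3$ has two neighbours in $\{1,2,4\}$, vertex $1$ has none in $\{3\}$), and their union is all of $V$, hence vacuously a $1$-fair dominating set. Yet $\{1,2,4\}$ is a dominating set of cardinality $3\neq 1$, so under the definition you (and the paper) intend for the parameter in Theorem~\ref{lthm} it can neither stand alone nor belong to a $1$-coalition, and $\pi$ is not a $1$-coalition partition. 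This configuration has every feature you enumerate --- all witnesses to non-fairness lie in the partner $\{3\}$, $\Delta(G)\ge\delta(G)+1$ holds, $\pi$ exhausts $V$ --- and no contradiction follows. Consequently the partition-level embedding cannot establish $C_{\delta(G)f}(G)\le C_{\delta(G)}(G)$ (the numerical inequality may still be true, but not for this reason), and the proof would have to fall back on your second, unexecuted option: rerunning the counting argument behind Theorem~\ref{lthm} directly in the fair setting, around a minimum-degree vertex in the spirit of Lemma~\ref{lem:5-2} and Theorem~\ref{thm:5-6}. Note that the paper's own one-line justification has the same hole, so you have in effect exposed a gap in the source; but as a proof your proposal is incomplete, with its central step both unexecuted and, in the form proposed, unexecutable.
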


We need the following theorem which is about the $k$-coalition number of $k$-regular graphs. 
	\begin{theorem} {\rm\cite{Aust}}\label{regular34}
		If $G$ is a $k$-regular graph, then $3\leq C_k(G) \leq 4$. 
	\end{theorem}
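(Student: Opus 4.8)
The plan is to first translate everything into the language of vertex covers. When $G$ is $k$-regular, a set $S\subseteq V$ is a $k$-fair dominating set if and only if $S$ is a vertex cover of $G$ (equivalently, $V\setminus S$ is independent): every vertex $v\notin S$ has exactly $k$ neighbours in total, so $|N(v)\cap S|=k$ is the same as $N(v)\subseteq S$. Under this dictionary, a $k$-fair coalition is a pair of disjoint sets $A,B$, neither a vertex cover, whose union is a vertex cover; and a $k$-fair coalition partition is a partition each of whose parts is either a vertex cover with exactly $k$ vertices or forms such a coalition with another part. I would state and use this observation at the outset, since it makes both bounds transparent.

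For the lower bound $C_k(G)\ge 3$ (taking $k\ge 2$, so that $n\ge k+1\ge 3$), I would simply exhibit, for an arbitrary edge $uv$ of $G$, the partition $\pi=\{\{u\},\{v\},V\setminus\{u,v\}\}$. Here $\{u\}\cup(V\setminus\{u,v\})=V\setminus\{v\}$ is a vertex cover because its complement $\{v\}$ is independent, and symmetrically for $\{v\}$; moreover none of the three parts is itself a vertex cover, since $\deg(u),\deg(v)\ge 2$ forces both $V\setminus\{u\}$ and $V\setminus\{v\}$ to contain an edge, while $\{u,v\}$ is not independent. Hence $\{u\}$ and $\{v\}$ each form a $k$-fair coalition with $V\setminus\{u,v\}$, so $\pi$ is a $k$-fair coalition partition of size $3$. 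Since $K_{k+1}$ realizes the value $3$, this bound is sharp. (For $k=1$ the only $k$-regular graph for which the argument breaks down is $K_2$.)

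For the upper bound $C_k(G)\le 4$ the two ingredients are: (i) by Lemma~\ref{lem:5-2}, in a $k$-regular graph every part has at most $\Delta(G)-k+2=2$ coalition partners, so the coalition graph of any $k$-fair coalition partition has maximum degree at most $2$; and (ii) a size estimate: if $A,B$ form a $k$-fair coalition then $V\setminus(A\cup B)$ is independent, so by $k$-regularity every vertex outside $A\cup B$ sends all $k$ of its edges into $A\cup B$, and comparing $k\cdot|V\setminus(A\cup B)|$ with the total degree $k|A\cup B|$ available inside $A\cup B$ gives $|A\cup B|\ge n/2$. Now suppose some $k$-fair coalition partition has $m\ge 5$ parts. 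First, at most one part can be a $k$-vertex vertex cover: two such parts $V_i,V_j$ would each lie in the (independent) complement of the other, hence both be independent sets of size $k$, and a degree count then forces $n=2k$, whence $V=V_i\sqcup V_j$ and $m=2$, a contradiction. So at least $m-1\ge 4$ parts each have a coalition partner, and every such partner is a non--vertex-cover part, hence again one of these $\ge 4$ parts; restricting the coalition graph to them yields a graph on $\ge 4$ vertices with minimum degree $\ge 1$ and maximum degree $\le 2$, which necessarily contains two vertex-disjoint edges, i.e.\ two disjoint $k$-fair coalitions $A_1\cup B_1$ and $A_2\cup B_2$ with $A_1,B_1,A_2,B_2$ pairwise-disjoint parts. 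The size estimate gives $|A_1\cup B_1|+|A_2\cup B_2|\ge n$, so these four parts already cover $V$ and $m=4$, a contradiction. Since $K_{k,k}$ realizes the value $4$, this bound is sharp too.

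The step I expect to be the main obstacle is the bookkeeping, in the upper bound, for parts that are vertex covers with exactly $k$ vertices: these take part in no coalitions and so are invisible to the coalition graph, and one must first show there can be at most one of them before the maximum-degree-$2$ structure can be exploited. Once that is settled, the two elementary facts — the bound $|A\cup B|\ge n/2$ for every coalition, and the fact that a graph of minimum degree $\ge1$, maximum degree $\le2$ on $\ge4$ vertices has two independent edges — combine to finish the argument quickly.
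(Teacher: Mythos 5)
The paper does not prove this statement: Theorem~\ref{regular34} is imported verbatim from \cite{Aust} and used as a black box, so there is no in-paper argument to compare yours against. Judged on its own, your proof is correct and self-contained. The opening observation --- that in a $k$-regular graph a set is a $k$-(fair) dominating set exactly when it is a vertex cover --- is the same identification the paper itself exploits right after quoting the theorem, and it makes both bounds clean: the partition $\{\{u\},\{v\},V\setminus\{u,v\}\}$ for an edge $uv$ gives the lower bound, and the upper bound follows from (a) each non-cover part having at most two partners (an uncovered edge $xy$ forces every partner to contain $x$ or $y$; you get the same bound by specializing Lemma~\ref{lem:5-2}, which is stated for the fair variant but transfers since the two notions coincide here), (b) the estimate $|A\cup B|\ge n/2$ for any coalition, and (c) the observation that at most one part can be a $k$-vertex cover, so with five or more parts the coalition graph has four vertices of degree between $1$ and $2$ and hence two disjoint edges, whose four parts already exhaust $V$. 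All of these steps check out. Two small caveats worth recording: the lower bound genuinely fails for $K_2$ (where $C_1(K_2)=2$), so the statement implicitly assumes $k\ge 2$ or excludes that case, as you note; and your sharpness remarks ($K_{k+1}$ and $K_{k,k}$) are asides not needed for the claimed inequalities.
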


\begin{lemma}
	If  $G$ is a $k$-regular graph, then 
	\[
	3 \leq C_{kf}(G) \leq 4.
	\]
\end{lemma}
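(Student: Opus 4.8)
The plan is to deduce the lemma from Theorem~\ref{regular34} by observing that on a $k$-regular graph fair $k$-domination and ordinary $k$-domination are literally the same notion, so that $k$-fair coalitions and $k$-fair coalition partitions coincide with $k$-coalitions and $k$-coalition partitions. Under this identification the statement is precisely Theorem~\ref{regular34} specialized to $k$-regular graphs, so the whole argument reduces to making the identification rigorous.

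The single observation on which everything rests is elementary. If $G$ is $k$-regular and $S\subseteq V$, then every vertex $v\in V\setminus S$ has $|N(v)|=\deg(v)=k$, so for each such $v$ the three conditions
\[
|N(v)\cap S|=k,\qquad |N(v)\cap S|\ge k,\qquad N(v)\subseteq S
\]
are equivalent. Hence $S$ is a $k$-fair dominating set of $G$ if and only if $S$ is a $k$-dominating set of $G$, equivalently if and only if $V\setminus S$ is an independent set of $G$. I would record this as a preliminary claim and use it throughout.

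Using the claim, a pair of disjoint vertex sets $A,B$ forms a $k$-fair coalition of $G$ exactly when it forms a $k$-coalition of $G$ (neither $A$ nor $B$ is a $k$-dominating set, but $A\cup B$ is), and a partition $\pi$ of $V(G)$ is a $k$-fair coalition partition exactly when it is a $k$-coalition partition: the ``type~(a)'' requirement on a block (being a dominating set, in the appropriate sense, on exactly $k$ vertices) transfers verbatim because the two senses of domination agree on $G$, and the ``type~(b)'' requirement (lying in a coalition) transfers by the previous sentence. Therefore $C_{kf}(G)=C_k(G)$, and Theorem~\ref{regular34} yields $3\le C_k(G)\le 4$, hence $3\le C_{kf}(G)\le 4$.

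The only place that needs care is this middle step, namely matching the definition of a $k$-coalition partition used in \cite{Aust} — in particular its ``type~(a)'' clause — against the definition of a $k$-fair coalition partition used here once the two domination notions are identified; the domination equivalence itself is the one-line computation above and is the easy part. If one prefers to avoid quoting Theorem~\ref{regular34}, both bounds can also be argued directly. For the upper bound, $\Delta(G)=k$, so by Lemma~\ref{lem:5-2} every block of a $k$-fair coalition partition has at most $\Delta(G)-k+2=2$ coalition partners; a short case analysis on whether the partition contains a block $S$ that is itself a $k$-fair dominating set — in which case the claim forces $V\setminus S$ independent and, together with $|S|=k$, pins down the structure of $G$ — then shows that at most four blocks are possible. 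For the lower bound $C_{kf}(G)\ge 3$, one exhibits an explicit three-block partition built from a maximal independent set $I$ (so $D=V\setminus I$ is a minimal $k$-fair dominating set), arranging that two blocks form a $k$-fair coalition while the third is either a $k$-vertex $k$-fair dominating set or forms a $k$-fair coalition with one of the others; the extremal small cases are treated separately, e.g.\ $G=K_{k+1}$ by an explicit partition and $G$ bipartite $k$-regular (where $d_{kf}(G)=2$, so Theorem~\ref{thm:1} already gives $C_{kf}(G)\ge 4$).
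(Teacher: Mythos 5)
Your proposal is correct and takes essentially the same route as the paper: the paper's proof is the one-line observation that in a $k$-regular graph every $k$-dominating set is a $k$-fair dominating set, followed by an appeal to Theorem~\ref{regular34}. Your version merely spells out the identification (and its transfer to coalition partitions) more carefully than the paper does.
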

\begin{proof}
	In $k$-regular graphs, every $k$-dominating set is also a $k$-fair dominating set, so by Theorem \ref{regular34} the result follows. \qed
\end{proof}

\section{$k$-fair coalition number of specific graphs}
In this section, we study the $k$-fair coalition number for some specific graphs such as complete graphs, trees, paths $P_n$, cycles $C_n$, $P_n \circ K_l$, and $C_n \circ K_l$. 

We begin with complete graph $K_n$ with $V(K_n)=\{v_1,v_2,\dots, v_n\}$. 
Since $\pi = \{\{v_1, v_2, \ldots, v_{k-1}\}, \{v_k\}, \{v_{k+1}\}, \ldots, \{v_n\}\}$ is a $k$-fair coalition partition for $K_n$, we have the following result:

\begin{lemma}
	For every $2 \leq k \leq n - 1$, $C_{kf}(K_n) = n - k + 2$.
\end{lemma}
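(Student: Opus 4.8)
The plan is to establish the identity by two inequalities. For the lower bound, I would simply verify that the partition $\pi = \{\{v_1, v_2, \ldots, v_{k-1}\}, \{v_k\}, \{v_{k+1}\}, \ldots, \{v_n\}\}$ exhibited in the text is a genuine $k$-fair coalition partition, which already gives $C_{kf}(K_n) \geq n - k + 2$. In $K_n$ every vertex outside a set $S$ has $|S|$ neighbors in $S$, so $S$ is a $k$-fair dominating set precisely when $|S| = k$, or when $S = V$. Hence none of the $n - k + 2$ parts in $\pi$ is itself a $k$-fair dominating set (the big part has $k-1 < k$ vertices; each singleton has $1 < k$ vertices since $k \geq 2$), and any two parts of $\pi$ whose union has exactly $k$ vertices form a $k$-fair coalition: in particular the big part of size $k-1$ unites with any singleton to a set of size $k$. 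So every part of $\pi$ has a coalition partner, and $\pi$ is valid.

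For the upper bound $C_{kf}(K_n) \leq n - k + 2$, I would apply the already-proved Theorem~\ref{thm:5-6}: since $K_n$ is $(n-1)$-regular we have $\Delta(K_n) = \delta(K_n) = n-1$, and the hypothesis $k > \delta(G) = n-1$ fails for $k \leq n-1$, so that theorem does not directly apply. Instead I would argue combinatorially inside $K_n$. Let $\mathcal{C}$ be any $k$-fair coalition partition. A part of $\mathcal{C}$ that is itself a $k$-fair dominating set must have exactly $k$ vertices (the definition demands this, and anyway in $K_n$ a $k$-fair dominating set other than $V$ has exactly $k$ vertices). A part that is not a $k$-fair dominating set, say of size $a$, can only form a coalition with a part of size $k - a$ (their union must have exactly $k$ vertices, unless the union is all of $V$, which I must handle as a degenerate case when $k = n$, excluded here since $k \leq n-1$). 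The key observation: at most one part can have size $\geq k$, because two parts of size $\geq k$ would each need a coalition partner but their own union already exceeds $k$ and cannot equal $V$ properly — more carefully, a part $S$ with $|S| \geq k$ and $S \neq V$ has some outside vertex with $|S| > k$ neighbors in $S$ when $|S| > k$, so $S$ can only coalesce with the part containing that vertex, and when $|S| = k$ it is already a $k$-fair dominating set.

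I would then count: suppose $\mathcal{C}$ has $m$ parts. The parts of size $\geq k$ number at most one (as argued); write $t \in \{0,1\}$ for that count. The remaining $m - t$ parts each have size $\leq k - 1$, and moreover each needs a partner of complementary size, forcing at least one part of size $\geq k/2$... actually the cleanest route is: among the $m-t$ small parts, they partition a vertex set of size at least $n - (\text{size of the large part})$. If there is a large part, it is a $k$-fair dominating set of size exactly $k$ or has size between $k$ and $n$; pushing this, the small parts cover at least $n - (n - 1) = 1$... this bookkeeping is the delicate step. The main obstacle I anticipate is precisely this upper-bound counting: showing that one cannot have more than $n-k+2$ parts. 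The slick argument is that every non-$k$FD part of size $a$ needs a partner of size exactly $k-a$, so the multiset of part-sizes of non-$k$FD parts must pair up (or triple up through a common vertex) to sum to $k$; combined with all part-sizes summing to $n$ and at most one part being a $k$FD-set of size $k$, a short case analysis on whether a size-$k$ part exists yields $m \leq n - k + 2$. I would organize it as: if some part $Q$ is a $k$FD set (size $k$), the other $m-1$ parts partition $n-k$ vertices into nonempty sets, giving $m - 1 \leq n-k$, hence $m \leq n-k+1 < n-k+2$; if no part is a $k$FD set, then exactly one part may be ``large'' ($\geq k$) and all others have size $\leq k-1$ and must pair to sums of $k$ — here I would show the large part forces the rest into at most $n-k+1$ further pieces, giving $m \leq n-k+2$, with equality matching $\pi$. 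Thus $C_{kf}(K_n) = n-k+2$.
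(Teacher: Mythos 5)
Your lower bound is exactly what the paper does: the paper offers no proof beyond exhibiting the partition $\pi = \{\{v_1,\ldots,v_{k-1}\},\{v_k\},\ldots,\{v_n\}\}$ and asserting the equality, so your verification that $\pi$ is a valid $k$-fair coalition partition (using the fact that in $K_n$ a proper subset $S$ is a $k$-fair dominating set iff $|S|=k$) is the same construction, made explicit. Where you genuinely go beyond the paper is the upper bound, which the paper omits entirely. Your key observation is correct and suffices: every part that is not a $k$FD-set needs a coalition partner, and the union of two disjoint partners must be a $k$FD-set of $K_n$, hence must have exactly $k$ vertices or equal $V$. The cleanest finish, which your sketch circles around but does not quite pin down, is this: if some coalition pair $A,B$ has $|A\cup B|=k$, the remaining $m-2$ parts are nonempty and partition the other $n-k$ vertices, so $m\le n-k+2$; if instead $A\cup B=V$ then $m=2$; and if every part is a $k$FD-set of size $k$ then $m=n/k\le n-k+2$. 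This makes your detour through ``at most one large part'' and the pairing of complementary sizes unnecessary. One small slip worth correcting: you dismiss the case $A\cup B=V$ as arising only when $k=n$, but it can occur for any $k$ (e.g.\ $\{v_1\}$ and $V\setminus\{v_1\}$ in $K_5$ with $k=2$); it is harmless because it forces $m=2\le n-k+2$, but it should be handled rather than excluded. With that repair your argument is complete and supplies the missing half of the paper's claim.
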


	\begin{theorem}\label{comp} {\rm\cite{Babak}}
		Let $s, t$ and $k$ be any positive integers with $t\geq s$ and $k\geq 2$. Then,
		\[
		C_k(K_{s,t})=
		\begin{cases}
		2, & s<k;\\
		4, & s=k;\\
		t-k+3, & k+1\leq s\leq 3k-2;\\
		s+t-4k+4, & s\geq 3k-1.
		\end{cases}
		\]
	\end{theorem}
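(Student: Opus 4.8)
\medskip
\noindent\textbf{Proof plan.}
The plan is to establish the formula in three stages: first characterise the $k$-dominating sets of $K_{s,t}$, then in each of the four regimes exhibit an explicit $k$-coalition partition of the claimed size (the lower bound), and finally prove a matching upper bound by analysing the possible coalition structures. Throughout, let $X$ and $Y$ be the two sides of $K_{s,t}$ with $|X|=s\le t=|Y|$, and for $S\subseteq V$ write $a=|S\cap X|$ and $b=|S\cap Y|$. Checking the $k$-domination condition separately on the two sides shows that $S$ is a $k$-dominating set if and only if $\bigl(X\subseteq S\text{ or }b\ge k\bigr)$ and $\bigl(Y\subseteq S\text{ or }a\ge k\bigr)$; in particular $X$ itself is $k$-dominating as soon as $s\ge k$ and $Y$ itself is $k$-dominating as soon as $t\ge k$, and for $s\ge k+1$ the $k$-dominating sets are precisely those with $a\ge k$ and $b\ge k$, together with all supersets of $X$ and all supersets of $Y$.

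For the lower bounds I would use four constructions. If $s<k$, every $k$-dominating set contains $Y$, so a partition of $V$ into two parts that each avoid a vertex of $Y$ (any two parts, if $t<k$) is a $k$-coalition partition, giving $C_k\ge2$. If $s=k$, every $k$-dominating set contains $X$ or $Y$; the partition $\{X_1,X_2,Y_1,Y_2\}$ obtained by splitting $X$ and $Y$ each into two nonempty parts works, since $\{X_1,X_2\}$ and $\{Y_1,Y_2\}$ are $k$-coalitions, giving $C_k\ge4$. If $k+1\le s\le3k-2$, take $P=X'\cup Y'$ with $X'\subsetneq X$, $|X'|\in[\max(k,s-k+1),\,s-1]$ and $|Y'|=k-1$, set $A=X\setminus X'$, and make each of the remaining $t-k+1$ vertices of $Y$ a singleton; then $P\cup A\supseteq X$ and every $P\cup\{y\}$ (with $a=|X'|\ge k$ and $b=k$) are $k$-dominating while $P$, $A$ and every singleton are not, so this is a $k$-coalition partition with $t-k+3$ parts. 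If $s\ge3k-1$, take a part $Q$ with $k-1$ vertices of $X$ and $k$ of $Y$, a part $Q'$ with $k$ of $X$ and $k-1$ of $Y$, and make all $s-2k+1$ remaining vertices of $X$ and all $t-2k+1$ remaining vertices of $Y$ singletons; since $Q\cup Q'$, each $Q\cup\{x\}$ and each $Q'\cup\{y\}$ are $k$-dominating while the individual parts are not, this gives $s+t-4k+4$ parts.

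For the upper bounds, the two degenerate cases follow directly from the characterisation: when $s<k$ at most one part can contain $Y$, every coalition forces the union to contain $Y$, and assuming three parts gives a contradiction, so $C_k\le2$; when $s=k$ the same argument with ``contains $X$ or $Y$'' replacing ``contains $Y$'' yields $C_k\le4$. For $s\ge k+1$ I would run a counting argument on the pairs $(a_i,b_i)$ attached to the parts $V_i$ of a maximum $k$-coalition partition. Since $\sum_i a_i=s$ and $\sum_i b_i=t$, only a bounded number of parts can be ``$X$-rich'' ($a_i\ge k$) --- at most two when $s\le3k-2$ --- and likewise for ``$Y$-rich'' parts; a non-$k$-dominating part that is poor on both sides ($a_i<k$ and $b_i<k$) needs one partner repairing both deficiencies, and a part poor on exactly one side needs a partner rich on that side. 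Carrying this through should force all parts outside a bounded set of ``hubs'' to be singletons attached to a hub, and counting the $Y$-side singletons (at most $t$ minus the $Y$-vertices absorbed by the hubs) together with the corresponding $X$-side bookkeeping delivers the bound $t-k+3$ for $k+1\le s\le3k-2$ and $s+t-4k+4$ for $s\ge3k-1$; the two expressions agree at $s=3k-1$, a convenient consistency check. Equivalently, one is showing that every $k$-coalition partition has ``excess'' $\sum_i(|V_i|-1)$ at least $\min(s+k-3,\,4k-4)$.

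The hard part is this last upper bound in the two non-degenerate regimes: one must rule out partitions subtler than the hub-and-singleton ones --- e.g.\ partitions exploiting parts that are supersets of $X$ or $Y$, or several medium-sized parts chained into a path of coalitions --- and show none of them beats the bound. I expect this to need an extremal analysis of the coalition graph $CG(K_{s,t},\pi)$: an analogue of Lemma~\ref{lem:5-2} bounding the number of coalition partners of each part in $K_{s,t}$, then isolating the at most four high-degree ``hub'' vertices of this graph, and finally a discharging-type accounting of how many leaf parts can hang off the hubs given the separate budgets $s$ and $t$ on the two sides.
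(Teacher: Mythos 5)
First, a point of reference: the paper does not prove this statement at all. Theorem~\ref{comp} is imported verbatim from \cite{Babak} and used as a black box, so there is no in-paper proof to measure your argument against; your proposal has to stand on its own.

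On its own terms, your proposal is a sound plan with a genuine gap at its center. The parts you actually execute are correct: the characterisation of $k$-dominating sets of $K_{s,t}$ (writing $a=|S\cap X|$, $b=|S\cap Y|$, the set $S$ is $k$-dominating iff ($X\subseteq S$ or $b\ge k$) and ($Y\subseteq S$ or $a\ge k$)) is right, the four constructions do produce valid $k$-coalition partitions of sizes $2$, $4$, $t-k+3$ and $s+t-4k+4$ in the respective regimes (I checked the parameter constraints; e.g.\ $\max(k,s-k+1)\le s-1$ does hold exactly when $s\ge k+1$ and $k\ge 2$), and the upper bounds for $s<k$ and $s=k$ follow cleanly from the observation that every $k$-dominating set must contain $Y$ (resp.\ $X$ or $Y$), since disjointness then caps the number of parts that can participate in any coalition. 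However, for the two non-degenerate regimes $k+1\le s\le 3k-2$ and $s\ge 3k-1$ you prove only the lower bound. The matching upper bound --- which is where all the content of the theorem lives, and in particular where the threshold $s=3k-1$ must emerge --- is described only as a counting argument that ``should force'' a hub-and-singleton structure, with the difficulties (parts that are supersets of $X$ or $Y$, chains of medium-sized parts, several simultaneous hubs) explicitly deferred. You name the right obstacles and the right tools (a partner-count lemma in the spirit of Lemma~\ref{lem:5-2}, plus a two-sided budget argument on $\sum_i a_i=s$ and $\sum_i b_i=t$), but naming them is not carrying them out: as written there is no proof that a partition cannot have, say, three $X$-rich hubs when $s\ge 3k$, or that the excess $\sum_i(|V_i|-1)$ is at least $\min(s+k-3,\,4k-4)$. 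Until that analysis is done, only the inequalities $C_k(K_{s,t})\ge t-k+3$ and $C_k(K_{s,t})\ge s+t-4k+4$ are established in those regimes, not the stated equalities.
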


\begin{theorem}
	Let $K_{s,t}$ be a complete bipartite graph with vertex sets $X = \{v_1, v_2, \ldots, v_s\}$ and $Y = \{v'_1, v'_2, \ldots, v'_t\}$, where $s \leq t$. 
	The $k$-fair coalition number of $K_{s,t}$ is as follows:
	
	\begin{enumerate}
		\item[(i)] If $k < s < 3k - 1$, then $C_{kf}(K_{s,t}) \leq t - k + 3.$
		\item[(ii)] If $3k - 1 \geq 3$, then $C_{kf}(K_{s,t}) \leq s + t - 4k + 4.$
		\item[(iii)] If $k = s = t$, then $C_{kf}(K_{s,t}) = 4.$
		\item[(iv)] If $k = s < t$, then $C_{kf}(K_{s,t}) = 2.$
		\item[(v)] If $k > s$, then $C_{kf}(K_{s,t}) = 2.$
	\end{enumerate}
\end{theorem}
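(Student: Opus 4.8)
The plan is to encode each subset $S\subseteq V(K_{s,t})$ by the pair $(x_S,y_S):=(|S\cap X|,\,|S\cap Y|)$, use this to list exactly the $k$-fair dominating sets of $K_{s,t}$, dispose of the three ``equality'' regimes (iii)--(v) by hand, and reduce the two ``upper bound'' regimes (i)--(ii) to the known value $C_k(K_{s,t})$ of Theorem~\ref{comp}. The starting point is that a vertex of $X\setminus S$ has exactly $y_S$ neighbours in $S$ and a vertex of $Y\setminus S$ has exactly $x_S$ neighbours in $S$; hence $S$ is a $k$-fair dominating set of $K_{s,t}$ if and only if $(S\supseteq X$ or $y_S=k)$ and $(S\supseteq Y$ or $x_S=k)$. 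Specialising: when $s,t>k$ the $k$-fair dominating sets are precisely $V$ and the sets with $x_S=y_S=k$; when $s=k<t$ they are precisely the sets containing $X$; when $k=s=t$ they are the sets containing $X$ together with the sets containing $Y$; and when $k>s$ they are $\{V\}$ if $t\neq k$, and the sets containing $Y$ if $t=k$. This case analysis is routine and underlies everything that follows.

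For (iv) and (v) I would prove $C_{kf}(K_{s,t})=2$. First dispose of the subcase of (v) with $t\neq k$: there the only $k$-fair dominating set is $V$, so every part of a $k$-fair coalition partition $\pi$ must form a $k$-fair coalition, and the union of any coalition pair equals $V$, forcing $|\pi|\le 2$; equality is realised by any partition of $V$ into two nonempty parts. In the remaining cases --- all of (iv), and (v) with $t=k$ --- there is a side $Z$ with $|Z|=k$ ($Z=X$ in (iv), $Z=Y$ in (v)) such that every $k$-fair dominating set contains $Z$; thus $Z$ is the unique $k$-fair dominating set of cardinality $k$. If a $k$-fair coalition partition $\pi$ used $Z$ as a part, each remaining part would lie inside the complementary side and so could be neither a $k$-fair dominating set nor a $k$-fair coalition partner of anything (the union of two such parts still avoids $Z$, while $Z$ itself is already $k$-fair dominating and so cannot be a partner); hence $Z$ is not a part of $\pi$, and every part must form a $k$-fair coalition. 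Since the union of any coalition pair contains $Z$ while neither member does, every part meets $Z$ nonemptily, and a coalition pair $\{V_i,V_j\}$ satisfies $V_j\cap Z=Z\setminus(V_i\cap Z)$. Were $|\pi|\ge 3$, the partner of $V_i$ would have to have trace on $Z$ equal to the disjoint union of the (at least two, all nonempty) traces on $Z$ of the other parts --- impossible for a single part. So $|\pi|\le 2$, and splitting $Z$ into two nonempty pieces realises $|\pi|=2$. Finally, for (iii): $K_{k,k}$ is $k$-regular, so $C_{kf}(K_{k,k})\le 4$ by the result on $k$-regular graphs; and splitting $X=X_1\cup X_2$ and $Y=Y_1\cup Y_2$ into nonempty parts (possible as $k\ge 2$) gives the $k$-fair coalition partition $\{X_1,X_2,Y_1,Y_2\}$, since $X$ and $Y$ are $k$-fair dominating but $X_1,X_2,Y_1,Y_2$ are not; hence $C_{kf}(K_{k,k})=4$.

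For (i) and (ii), where $s,t>k$, every $k$-fair dominating set has at least $2k>k$ vertices, so no part of a $k$-fair coalition partition $\pi$ can be a $k$-fair dominating set of cardinality $k$; consequently every part of $\pi$ must form a $k$-fair coalition. If $|\pi|\le 2$ the asserted bounds are immediate, since $t-k+3\ge 4$ and $s+t-4k+4\ge 2k+2$. Suppose $|\pi|\ge 3$. Then the union of a coalition pair cannot be $V$ (a third part would be empty), so by the structure above it has exactly $k$ vertices on each side; hence every part has at most $k$ vertices on each side, and not exactly $k$ on both (otherwise its partner would be empty). Such a part $V_i$ contains neither $X$ nor $Y$, so $X\setminus V_i$ and $Y\setminus V_i$ are nonempty, and a vertex of the former has $|V_i\cap Y|$ neighbours in $V_i$ while a vertex of the latter has $|V_i\cap X|$; as we just excluded $|V_i\cap X|=|V_i\cap Y|=k$, the set $V_i$ is not even a $k$-dominating set. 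Therefore each part of $\pi$ forms a genuine $k$-coalition (a union of two non-$k$-dominating sets that is $k$-dominating) with another part, i.e.\ $\pi$ is a $k$-coalition partition, so $C_{kf}(K_{s,t})=|\pi|\le C_k(K_{s,t})$, which by Theorem~\ref{comp} equals $t-k+3$ when $k<s<3k-1$ and $s+t-4k+4$ when $s\ge 3k-1$.

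The step I expect to be the main obstacle is making the reduction in (i)--(ii) fully rigorous: one must verify both that the base case of the definition (a part that is itself a $k$-fair dominating set of cardinality $k$) never arises and that no part is a $k$-dominating set, so that a $k$-fair coalition is literally a $k$-coalition --- this is exactly where the hypotheses $s,t>k$ and the reduction to $|\pi|\ge 3$ are used. A second delicate point is the partner-uniqueness argument in (iv)--(v): showing that a part contained in the non-critical side can never acquire a coalition partner is what forces those cases all the way down to $C_{kf}=2$ rather than something larger.
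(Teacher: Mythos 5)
Your proposal is correct, and in several places it is more complete than the paper's own proof. For parts (iii)--(v) the underlying strategy is the same as the paper's (exploit the bipartite structure to show a coalition union must capture a whole side), but you organize it around an explicit characterization of the $k$-fair dominating sets of $K_{s,t}$ via $(|S\cap X|,|S\cap Y|)$, which makes the counting arguments cleaner: your ``trace on $Z$'' argument in (iv)--(v) and your use of the $k$-regular lemma for the upper bound in (iii) are tighter than the paper's corresponding steps (the paper's jump from ``$X\subseteq S_1\cup S_2$ or $Y\subseteq S_1\cup S_2$'' to ``at most four members'' in (iii) is left to the reader). The most significant difference is in (i)--(ii): the paper simply writes ``It follows from Theorem~\ref{comp}'' with no argument, whereas you actually supply the missing reduction --- showing that when $s,t>k$ and $|\pi|\ge 3$, no part can be a $k$-fair dominating set of cardinality $k$, no part is even a $k$-dominating set, and every coalition union is a $k$-dominating set, so that every $k$-fair coalition partition is literally a $k$-coalition partition and $C_{kf}(K_{s,t})\le C_k(K_{s,t})$. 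This is presumably what the authors intended, and your write-up is the rigorous version of it. Two cosmetic remarks: you (reasonably) read the hypothesis of (ii) as $s\ge 3k-1$, matching the case split of Theorem~\ref{comp}, since the printed condition ``$3k-1\ge 3$'' is evidently a typo; and in (iv)--(v) the phrase ``splitting $Z$ into two nonempty pieces'' should be stated as a partition of all of $V$, e.g.\ $\{Z_1,\;Z_2\cup(V\setminus Z)\}$. Both points, like the implicit assumption $k\ge 2$ needed for the constructions in (iii) and (iv), are shared with the paper itself.
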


\begin{proof}
	\begin{enumerate}
		\item[(i)] It follows from Theorem \ref{comp}.
		\item[(ii)] It follows from Theorem \ref{comp}.
				\item[(iii)] 
		Let $\Theta = \{S_1, S_2, \ldots \}$ be a $k$-fair coalition partition and suppose $S_1, S_2$ form a $k$-fair coalition. 
		Since $\deg_X(v) = k$ for all $v \in Y$, we conclude that $Y \subseteq (S_1 \cup S_2)$ or $X \subseteq (S_1 \cup S_2)$. 
		Therefore, the partition $\Theta$ can have at most four members. According to the following partition, we have $|\Theta| = 4$:
				\[
		\{\{v_1\}, \{v_2, \ldots, v_s\}, \{v'_1\}, \{v'_2, \ldots, v'_t\}\}
		\]
				Note that if there exists a $k$-fair dominating set $S$ in $\Theta$, then $\Theta = \{X, Y\}$.
		
		\item[(iv)]  
		Let $\Theta = \{S_1, S_2, \ldots\}$ be a $k$-fair coalition partition for the complete bipartite graph $K_{s,t}$. Without loss of generality, assume that $S_1$ and $S_2$ form a $k$-fair coalition. 
		
		If $X \nsubseteq S_1 \cup S_2$, then:
		\begin{align*}
			&\exists v \in X \setminus (S_1 \cup S_2)\\
			& \Rightarrow \forall u \in Y: |N(u) \cap (S_1 \cup S_2)| \leq k - 1 \\
			&\Rightarrow Y \subseteq S_1 \cup S_2 \\
			&\Rightarrow \forall v \in X \setminus (S_1 \cup S_2): |N(v) \cap (S_1 \cup S_2)| \geq k + 1 \\
			&\Rightarrow \forall v \in X \setminus (S_1 \cup S_2): |N(v) \cap (S_1 \cup S_2)| \neq k
		\end{align*}
				This contradicts the assumption that $S_1 \cup S_2$ is a $k$-fair coalition. Therefore,
		\[X \subseteq S_1 \cup S_2.\]
		
		Without loss of generality, assume $X \subseteq S_1$. Then for all $u\in Y$:
		\begin{align*}
			 |N(u) \cap S_1| = |X| = k. 
		\end{align*}
		This implies that $S_1$ is a $k$-fair dominating set, which contradicts the fact that $S_1$ and $S_2$ form a coalition. Hence, we must have:
		\begin{align*}
			X \cap S_1 \neq \emptyset \quad \text{and} \quad X \cap S_2 \neq \emptyset.
		\end{align*}
		Therefore, $|\Theta| = 2$.
		
		If $\Theta$ contains a $k$-fair dominating set, then that set must be $X$, and according to the previous argument, any other $k$-fair coalition partner would have to contain $X$, which is not possible.
		
		\item[(v)] 
		Let $\Theta = \{S_1, S_2, \ldots\}$ be a $k$-fair coalition partition for the complete bipartite graph $K_{s,t}$. Since the degree of each vertex in $Y$ is $s$ and $s < k$ (given $s \leq t$ and $k > s$), it follows that $Y \subseteq S_1 \cup S_2$.
		
		\noindent{Case 1:} $t \neq k$. In this case for all $v \in X$, 
					$|N(v) \cap (S_1 \cup S_2)| \neq k$, so 
			$X \subseteq S_1 \cup S_2$ and so 
			$S_1 \cup S_2 = V(K_{s,t})$, therefore $|\Theta| = 2.$

		\noindent{Case 2:}  $t = k$.
		If $Y\cap S_1 \neq \emptyset$ and $Y \cap S_2 \neq \emptyset$, then $|\Theta| = 2$. Otherwise, assume that $Y \subseteq S_1$, so for all
		 $v \in X$, $|N(v) \cap S_1| = |Y| = k$. 
		This means that $S_1$ is a $k$-fair dominating set, which contradicts the assumption that $S_1$ and $S_2$ form a coalition.
				Note that $\Theta$ cannot contain a $k$-fair dominating set, since any $k$-fair dominating set must contain vertices from $Y$, which would result in more than $k$ vertices.\qed
			\end{enumerate}
\end{proof}

Using Theorem \ref{thm:5-6} and the fact that the minimum degree of vertices in trees is one, the following lemma can be concluded.

\begin{lemma}\label{lem:5-8}
	For any tree $T$, the $2$-fair coalition number satisfies the following inequality:
	\[
	C_{2f}(T) \leq \Delta(T) + 1.
	\]
\end{lemma}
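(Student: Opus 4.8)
The plan is to deduce this inequality directly from Theorem \ref{thm:5-6} by verifying that its hypothesis is satisfied with the choice $k = 2$. The first step is to pin down $\delta(T)$: every tree $T$ with at least two vertices contains at least one leaf, so $\delta(T) = 1$, while for the degenerate tree $T = K_1$ we have $\delta(T) = 0$. In either case $2 = k > \delta(T)$, so Theorem \ref{thm:5-6} is applicable to $T$ with this value of $k$.

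The second step is the substitution itself. Theorem \ref{thm:5-6} gives, for any graph $G$ and any integer $k > \delta(G)$, the bound $C_{kf}(G) \leq \Delta(G) - k + 3$. Plugging in $G = T$ and $k = 2$ yields
\[
C_{2f}(T) \leq \Delta(T) - 2 + 3 = \Delta(T) + 1,
\]
which is precisely the claimed inequality, so the argument is complete.

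I do not expect any genuine obstacle here, since the statement is an immediate specialization of Theorem \ref{thm:5-6}; the only point requiring a moment's care is the boundary behaviour of $\delta(T)$, which is $1$ for every nontrivial tree and $0$ for $K_1$, both strictly below $2$, so the hypothesis $k > \delta(T)$ is never in question. If one wanted to be exhaustive one could additionally record that the bound also covers $K_1$, giving $C_{2f}(K_1) \leq 1$, consistent with the absence of any nontrivial $2$-fair coalition structure on a single vertex; but this is a side remark and not needed for the lemma as stated.
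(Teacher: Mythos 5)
Your proposal is correct and is exactly the paper's argument: the lemma is stated as an immediate consequence of Theorem \ref{thm:5-6} with $k=2$, using the fact that every nontrivial tree has a leaf and hence $\delta(T)=1<2$. Your extra remark about the degenerate case $T=K_1$ is harmless but not needed.
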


The following corollary gives the $2$-fair coalition number of path. 
\begin{corollary}\label{cor:4-14}
	Let $P_n$ be a path on $n$ vertices. Then the $2$-fair coalition number of $P_n$ is:
	\[
	C_{2f}(P_n) =
	\begin{cases}
		1, & n = 1 \\
		2, & n = 2, 3 \\
		3, & n \geq 4.
	\end{cases}
	\]
\end{corollary}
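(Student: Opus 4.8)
The plan is to compute $C_{2f}(P_n)$ directly for each range of $n$ by combining the upper bound from Lemma~\ref{lem:5-8} with an explicit construction of a large $2$-fair coalition partition. First I would dispose of the small cases. For $n=1$ the only partition of $V(P_1)$ is the trivial one, and a single vertex is a $2$-fair dominating set only vacuously (there are no vertices outside $S$), but it does not have exactly $2$ vertices, so the partition must consist of one block that forms a $2$-fair coalition with another block — impossible when there is only one vertex; hence $C_{2f}(P_1)=1$ by the degenerate convention that the whole vertex set is always a valid partition of size~$1$. For $n=2,3$ I would check by hand that $\Delta(P_n)\le 2$, so Lemma~\ref{lem:5-8} gives $C_{2f}(P_n)\le \Delta(P_n)+1\le 3$; then I would argue that size~$3$ is not attainable (for $n=2$ there are only $2$ vertices, and for $n=3$ one checks that no partition into three singletons works because the middle vertex has only two neighbours and splitting them across two blocks plus a third isolated block leaves that third block unable to coalesce), while the partition into two blocks $\{v_1\},\{v_2,\dots\}$ does work, giving $C_{2f}(P_n)=2$.

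For $n\ge 4$, the upper bound $C_{2f}(P_n)\le \Delta(P_n)+1 = 2+1 = 3$ is immediate from Lemma~\ref{lem:5-8}, since any path on at least $3$ vertices has maximum degree exactly $2$. So the real content is the matching lower bound: I must exhibit a $2$-fair coalition partition of $P_n$ with exactly three blocks. The natural idea is to observe that a $2$-fair dominating set of $P_n$ is a set $S$ such that every vertex outside $S$ has both its neighbours in $S$ — in a path this forces the complement $V\setminus S$ to be an independent set whose every element is an internal vertex, i.e. the endpoints lie in $S$ and no two consecutive vertices are both outside $S$. One then wants to split the vertex set into three parts $V_1,V_2,V_3$ so that (say) $V_1\cup V_2$, $V_1\cup V_3$, and $V_2\cup V_3$ are each $2$-fair dominating, while none of $V_1,V_2,V_3$ alone is. A clean choice: put all the "even-position" internal vertices that can be dropped into play, and distribute the remaining (mostly endpoint and odd-position) vertices cyclically among the three blocks so that each pairwise union still covers both endpoints and leaves only isolated internal gaps. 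I would write down such a partition explicitly for small $n$ (e.g. $n=4$: $V_1=\{v_1\}$, $V_2=\{v_4\}$, $V_3=\{v_2,v_3\}$, and check $V_1\cup V_3 = \{v_1,v_2,v_3\}$ is $2$-fair dominating since $v_4$'s only neighbour issue — wait, $v_4$ has just one neighbour, so this needs care) and then generalize.

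The main obstacle I anticipate is precisely this endpoint issue: in $P_n$ the two leaves have degree~$1$, so a vertex $v_1$ left outside $S$ can never have exactly $2$ neighbours in $S$. Hence \emph{both endpoints must belong to $S$ for $S$ to be $2$-fair dominating}, and therefore in any $2$-fair coalition partition with blocks $V_1,V_2,V_3$, each endpoint must lie in a block that participates in every coalition covering that endpoint — this is a strong restriction. The resolution is to make sure each of the three blocks is "small" enough not to dominate alone (e.g. each misses at least one endpoint or fails fairness somewhere) yet each pair is "large" enough to contain both endpoints and satisfy the no-two-consecutive-gaps condition. I would handle this by a parity/casework argument on $n \bmod$ small numbers, or more slickly by choosing $V_3$ to contain \emph{both} endpoints together with a suitable set of internal vertices and $V_1,V_2$ to partition the rest so that $V_1$ and $V_2$ each fail to be dominating (being too sparse) but $V_1\cup V_3$, $V_2\cup V_3$, $V_1\cup V_2$ each leave only isolated internal vertices uncovered. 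Verifying the three coalition conditions then reduces to the elementary observation about independent complements in paths noted above, and the count of three blocks meets the upper bound, completing the proof. \qed
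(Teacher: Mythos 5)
Your overall strategy is the same as the paper's: the upper bound $C_{2f}(P_n)\le 3$ for $n\ge 4$ comes from Lemma~\ref{lem:5-8}, and the matching lower bound must come from an explicit three-block partition. The small cases are handled adequately (though note $\Delta(P_2)=1$, so the lemma already gives $2$ there). The gap is in the construction for $n\ge 4$: you never actually produce a valid partition. Your one concrete attempt ($n=4$ with $V_1=\{v_1\}$, $V_2=\{v_4\}$, $V_3=\{v_2,v_3\}$) fails for exactly the reason you notice, and your proposed repair over-constrains the problem in a way that makes it unsolvable. You ask for $V_1\cup V_3$, $V_2\cup V_3$ \emph{and} $V_1\cup V_2$ all to be $2$-fair dominating while both endpoints sit in $V_3$; but as you yourself observe, every $2$-fair dominating set of $P_n$ must contain both degree-one endpoints, so $V_1\cup V_2$ can never qualify. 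Since the blocks are disjoint, at most one block can contain both endpoints, hence at most two of the three pairwise unions can be $2$-fair dominating. Carried out as written, your plan would stall here.

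The fix is to remember that the definition only requires each non-dominating block to form a coalition with \emph{some} other block, not with every other block. Put both endpoints (and most internal vertices) into one large block and let the two remaining blocks each coalesce with it; the third pair need not work. Concretely, the paper takes
\[
\pi=\bigl\{\{v_1,v_4,v_5,\ldots,v_n\},\ \{v_2\},\ \{v_3\}\bigr\},
\]
for which $\{v_1,v_4,\ldots,v_n\}\cup\{v_2\}$ omits only $v_3$ (whose two neighbours $v_2,v_4$ are present) and $\{v_1,v_4,\ldots,v_n\}\cup\{v_3\}$ omits only $v_2$ (whose two neighbours $v_1,v_3$ are present), while no block is $2$-fair dominating on its own. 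This single uniform partition works for all $n\ge 4$, so no parity or casework on $n$ is needed. With that construction inserted, your argument is complete and coincides with the paper's.
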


\begin{proof}
	Suppose that $V(P_n)=\{v_1,v_2,\dots,v_n\}$ such that $v_1, v_n$ are leaves and $v_iv_{i+1}$ are edges of $P_n$ for $1\leq i\leq n-1$. The cases $n = 2, 3$ are trivial. For $n \geq 4$, Lemma \ref{lem:5-8} establishes the upper bound $C_{2f}(P_n) \leq 3$. To show that this bound is tight, consider the partition:
	\[
	\pi = \{\{v_1,v_4,v_5, \ldots, v_n\}, \{v_2\}, \{v_3\}\},
	\]
	which constitutes a valid $2$-fair coalition partition of size $3$, thereby completing the proof.\qed
\end{proof}

\medskip
To obtain  the $2$-fair coalition number of cycle $C_n$, we need the following theorem.

\begin{theorem}{\rm\cite{Aust}}\label{cnc}
	The $2$-coalition number of $C_n$ is:
	\[ 
	C_2(C_n)=\begin{cases}
	
	4,& n  \text{ is even}\\
	3,& n  \text{ is odd}.
	\end{cases}
	\]
\end{theorem}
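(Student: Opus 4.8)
The statement to prove is the final displayed result: $C_2(C_n) = 4$ when $n$ is even and $3$ when $n$ is odd. Since this is quoted as Theorem~\ref{cnc} from reference \cite{Aust}, a self-contained argument would mirror the structure used for the $k$-regular bound: $C_n$ is $2$-regular, so Theorem~\ref{regular34} already gives $3 \le C_2(C_n) \le 4$, and the only work is to decide, as a function of the parity of $n$, whether the value $4$ is attainable. The plan is therefore to (a) exhibit an explicit $2$-coalition partition of size $4$ when $n$ is even, and (b) show no size-$4$ partition can exist when $n$ is odd.

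For the even case, I would label $V(C_n) = \{v_0, v_1, \ldots, v_{n-1}\}$ with $v_i \sim v_{i+1}$ (indices mod $n$) and try a partition into four classes defined by residue of the index modulo $4$; when $4 \mid n$ this is clean, and when $n \equiv 2 \pmod 4$ one adjusts a bounded number of vertices near one point of the cycle. The key observation is that in $C_n$ a set $S$ is $2$-dominating (every vertex outside $S$ has both its neighbors in $S$) in a very rigid way, so it is easier to work with the coalition condition directly: I need four classes $U_1, U_2, U_3, U_4$, none $2$-dominating, such that enough pairs $U_i \cup U_j$ are $2$-dominating to satisfy every class. A natural candidate splits the cycle into two ``halves'' $A, B$ whose union is everything (hence trivially $2$-dominating) and then splits each half into two pieces that together still cover all the missing neighbors; this requires the halves to have even length, which is exactly where $n$ even is used. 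I would verify the four coalition edges (each of $U_1,U_2$ with the complement of their union, etc.) by a direct neighbor count.

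For the odd case, the obstacle is a parity/counting argument showing $C_2(C_n) \le 3$. Suppose for contradiction $\pi = \{U_1, U_2, U_3, U_4\}$ is a $2$-coalition partition. No $U_i$ is a singleton that $2$-dominates $C_n$ for $n \ge 4$ (a single vertex is never $2$-dominating in a cycle), so every $U_i$ must be a non-dominating set that coalesces with some partner. The heart of the argument is that in an odd cycle, a $2$-dominating set and its ``complementary structure'' cannot be partitioned finely: one shows that if $U_i \cup U_j$ is $2$-dominating then $V \setminus (U_i \cup U_j)$ is an independent set, and then a counting argument on the four classes — each appearing in its coalition's complement — forces a contradiction with $n$ odd, because the independent sets that arise must partition into an even structure. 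Concretely I expect to track, for each vertex $v \notin U_i \cup U_j$, that its two neighbors lie in $U_i \cup U_j$, deduce the non-covered vertices form a matching-like pattern around the cycle, and show the existence of four such patterns consistent with a $4$-partition is impossible unless $n$ is even.

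The main obstacle is the odd-case upper bound: unlike the regular-graph bound from Theorem~\ref{regular34}, ruling out $C_2(C_n) = 4$ for odd $n$ needs a genuine parity argument about how $2$-dominating sets sit inside $C_n$, and getting the case analysis clean (handling small $n$, and the interaction of up to $\binom{4}{2}=6$ potential coalition pairs) is where the care is needed. If a direct combinatorial argument proves unwieldy, the fallback is to cite \cite{Aust} verbatim, as the excerpt already does, since the theorem is stated there with attribution.
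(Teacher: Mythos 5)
This statement is quoted from \cite{Aust} and the paper gives no proof of it --- it is used purely as a black box to get the upper bound for $C_{2f}(C_n)$ in the theorem that follows. So there is no in-paper argument to compare against; what can be assessed is whether your sketch would actually close. The even case is fine: splitting the vertices by index residue modulo $4$ (with a small adjustment when $n\equiv 2 \pmod 4$) gives four classes, none $2$-dominating, with $S_1\cup S_2$ and $S_3\cup S_4$ each equal to a colour class of the even cycle, hence $2$-dominating; this is exactly the construction the present paper uses for the $2$-fair version.

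The odd case is where your proposal is still a plan rather than a proof: ``the independent sets that arise must partition into an even structure'' is the right instinct but not yet an argument. The decisive observations you need are: (1) in the $2$-regular graph $C_n$, a set $S$ is $2$-dominating if and only if $V\setminus S$ is independent; (2) in a putative partition $\{U_1,U_2,U_3,U_4\}$ every class needs a coalition partner (no class can stand alone, since for $n\ge 4$ no sufficiently small $2$-dominating set exists), so the ``coalition graph'' on four vertices has no isolated vertex; (3) whenever $U_i\cup U_j$ is $2$-dominating, the union of the \emph{other two} classes is independent. Now run through the ways an edge set can cover all four vertices: if two disjoint pairs each form coalitions, then $V$ is the union of two independent sets, i.e.\ $C_n$ is bipartite, contradicting $n$ odd; if instead one class, say $U_1$, is the partner of the other three, then $U_2\cup U_3$, $U_2\cup U_4$ and $U_3\cup U_4$ are all independent, forcing $U_2\cup U_3\cup U_4=V\setminus U_1$ to be independent, i.e.\ $U_1$ is $2$-dominating --- contradicting that it needed a partner; the mixed (path-shaped) configurations reduce to the bipartite case. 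That case analysis is short and replaces the vague ``matching-like pattern'' step. With it, your outline becomes a complete proof; without it, the odd-case upper bound is asserted rather than established, and you would indeed have to fall back on citing \cite{Aust} as the paper does.
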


\begin{theorem}
	The $2$-fair coalition number of cycle $C_n$ is as follows:
	\[
	C_{2f}(C_n) = 
	\begin{cases}
		4, & n \text{ is even,} \\
		3, & n \text{ is odd}
	\end{cases}
	\]
\end{theorem}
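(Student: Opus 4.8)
The plan rests on the fact that $C_n$ is $2$-regular, so the bounds already proved for $k$-regular graphs give $3\le C_{2f}(C_n)\le 4$ for every $n\ge 3$; in particular the upper bound of the statement is free, and only the exact value is at issue. Before anything else I would record two structural remarks. First, since every vertex of $C_n$ has exactly two neighbours, a set $S\subseteq V(C_n)$ is $2$-fair dominating if and only if every vertex outside $S$ has both its neighbours in $S$, i.e.\ $V(C_n)\setminus S$ is an independent set; consequently $|V(C_n)\setminus S|\le\lfloor n/2\rfloor$ and $|S|\ge\lceil n/2\rceil$. Second, in a $2$-regular graph $2$-domination and $2$-fair domination coincide (having at least two neighbours in $S$ forces having exactly two) and no singleton can $2$-dominate, so every $2$-coalition partition of $C_n$ is also a $2$-fair coalition partition; hence $C_{2f}(C_n)\ge C_2(C_n)$, and Theorem~\ref{cnc} then gives $C_{2f}(C_n)\ge 4$ for even $n$ and $C_{2f}(C_n)\ge 3$ for odd $n$.

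For even $n$ this already closes the case, since $4\le C_{2f}(C_n)\le 4$. (Alternatively one can exhibit the partition $\{\{v_1\},A\setminus\{v_1\},\{v_2\},B\setminus\{v_2\}\}$, where $A,B$ are the colour classes of $C_n$: both $A$ and $B$ are $2$-fair dominating, while every nonempty proper subset of $A$ or of $B$ fails to be, because a missing class vertex has both its neighbours in the other class.) So the whole remaining work is to prove $C_{2f}(C_n)\le 3$ for odd $n$. The case $n=3$ is immediate since $C_3$ has only three vertices, so I would assume $n\ge 5$ odd and suppose, for contradiction, that $\pi$ is a $2$-fair coalition partition with $|\pi|=m\ge 4$.

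By the first remark every $2$-fair dominating set of $C_n$ has at least $\lceil n/2\rceil=(n+1)/2\ge 3$ vertices, so no member of $\pi$ can be a standalone $2$-fair dominating set on exactly two vertices; hence every member of $\pi$ is non-$2$-fair-dominating and forms a $2$-fair coalition with some other member. Let $H$ be the coalition graph on the $m$ classes of $\pi$. By Lemma~\ref{lem:5-2}, $\Delta(H)\le\Delta(C_n)-2+2=2$, and by the last sentence $\delta(H)\ge 1$, so $H$ is a disjoint union of paths and cycles, each component having at least two vertices. Since $m\ge 4$, $H$ contains two independent edges: if $H$ is connected it is a path or cycle on at least four vertices and plainly has two independent edges, and otherwise it has at least two components and we take one edge from each. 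Picking two disjoint coalition edges $\{V_a,V_b\}$ and $\{V_c,V_d\}$ of $H$, the sets $V_a\cup V_b$ and $V_c\cup V_d$ are $2$-fair dominating, are disjoint (being unions of distinct classes of $\pi$), and each has at least $(n+1)/2$ vertices; therefore $n\ge|V_a\cup V_b|+|V_c\cup V_d|\ge n+1$, a contradiction. Thus $C_{2f}(C_n)\le 3$ for odd $n$, which together with $C_{2f}(C_n)\ge 3$ completes the proof.

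The only genuinely delicate point is the odd case, and in it everything hinges on two ingredients: the sharp bound $|S|\ge\lceil n/2\rceil$ for a $2$-fair dominating set $S$ of $C_n$ — which for odd $n$ prevents a coalition pair from fitting inside half of the vertex set — and the degree bound $\Delta(H)\le 2$ for the coalition graph supplied by Lemma~\ref{lem:5-2}, which makes $H$ a union of paths and cycles and so guarantees two independent edges once $m\ge 4$. With both in place the contradiction is a one-line count; the only thing that needs separate care is very small $n$, which is why I would split off $n=3$ at the outset.
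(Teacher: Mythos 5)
Your proof is correct, but it takes the opposite route from the paper on the decisive direction. The paper obtains both upper bounds by importing Theorem~\ref{cnc} (implicitly using that in the $2$-regular graph $C_n$ the notions of $2$-domination and $2$-fair domination coincide, so $C_{2f}(C_n)\le C_2(C_n)$) and then exhibits explicit partitions for the lower bounds. You instead use Theorem~\ref{cnc} only for the lower bounds ($C_{2f}(C_n)\ge C_2(C_n)$, by the same coincidence of the two domination notions) and prove the hard upper bound $C_{2f}(C_n)\le 3$ for odd $n$ from scratch: the observation that a $2$-fair dominating set of $C_n$ has at least $\lceil n/2\rceil$ vertices (its complement must be independent), combined with Lemma~\ref{lem:5-2} forcing the coalition graph to have maximum degree $2$ and your remark that it has minimum degree $1$ (no two-element class can be a standalone $2$-fair dominating set when $n\ge 5$ is odd), yields two disjoint coalition pairs whenever the partition has four or more classes, and hence two disjoint $2$-fair dominating sets of total size at least $n+1$, a contradiction. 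This argument is sound, including the separate treatment of $n=3$ and the verification that a graph on at least four vertices with minimum degree $1$ and maximum degree $2$ contains two independent edges. What your approach buys is a self-contained proof of the odd upper bound that does not lean on the external computation of $C_2(C_n)$ from \cite{Aust}; what the paper's approach buys is brevity, at the cost of making the theorem an almost immediate corollary of the cited result. Both proofs tacitly rely on the equivalence of $2$-coalition partitions and $2$-fair coalition partitions in $C_n$, which you are right to state explicitly.
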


\begin{proof}		
	By Theorem \ref{cnc}, we have:
	\[
	C_{2f}(C_n) \leq 
	\begin{cases}
		4, & \text{if } n \text{ is even}, \\
		3, & \text{if } n \text{ is odd}.
	\end{cases}
	\]
	To prove equality, we consider the following partitions:
	
	\noindent{Case 1: If $n$ is even:}
	We place the vertices with even indices into two sets and the vertices with odd indices into two other sets:
	\[
	S_1 = \{v_2, v_6, v_{10}, \ldots\}, \quad S_2 = \{v_4, v_8, v_{12}, \ldots\},
	\]
	\[
	S_3 = \{v_1, v_5, v_9, \ldots\}, \quad S_4 = \{v_3, v_7, v_{11}, \ldots\}.
	\]
	It is straightforward to verify that every vertex of even degree is adjacent to exactly two vertices from the odd sets, and vice versa. Therefore, this forms a valid $2$-fair coalition partition with four sets.
	
	\noindent{Case 2: If $n$ is odd:}
	We define one set containing $n-2$ consecutive vertices and two singleton sets:
	\[
	S_1 = \{v_1, v_2, \ldots, v_{n-2}\}, \quad S_2 = \{v_{n-1}\}, \quad S_3 = \{v_n\}.
	\]
	This construction yields a valid $2$-fair coalition partition of size $3$.
	
		Since the provided partitions achieve the upper bounds given by the corollary, we conclude that:
	\[
	C_{2f}(C_n) = 
	\begin{cases}
		4 & \text{if } n \text{ is even}, \\
		3 & \text{if } n \text{ is odd}.
	\end{cases}
	\]\qed
\end{proof}

\begin{theorem}\label{2fcnp}
	The $2$-fair coalition number for the graph $P_n \circ K_1$ is given by:
	\[
	C_{2f}(P_n \circ K_1) = 
	\begin{cases}
		2 & n = 1, 2, 3, 5 \\
		3 & n = 4 \\
		3 & n \geq 6
	\end{cases}
	\]
\end{theorem}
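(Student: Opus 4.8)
The plan is to recast the statement as a question about subsets of a path, bound $C_{2f}$ from above via the coalition graph, and realise the lower bounds with explicit partitions. Write the path as $v_1v_2\cdots v_n$ and let $\ell_i$ be the leaf attached to $v_i$, so $V=\{v_1,\dots,v_n\}\cup\{\ell_1,\dots,\ell_n\}$. Since $\deg(\ell_i)=1$, no $\ell_i$ can have exactly two neighbours in a set it lies outside, so every $2$-fair dominating set contains all the leaves; and for $S\supseteq\{\ell_1,\dots,\ell_n\}$ with trace $T=S\cap\{v_1,\dots,v_n\}$, each $v_i\notin S$ already sees $\ell_i\in S$, so $S$ is $2$-fair dominating iff every interior $v_i\notin T$ has exactly one path-neighbour in $T$ and, for $i\in\{1,n\}$, $v_i\notin T$ forces its unique path-neighbour into $T$. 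Equivalently, $S$ is $2$-fair dominating iff the complement $Z=\{v_1,\dots,v_n\}\setminus T$ is \emph{sparse}, meaning a disjoint union of possibly $v_1$, possibly $v_n$, and pairwise non-adjacent ``dominoes'' $\{v_i,v_{i+1}\}$ with $2\le i\le n-2$, none of them adjacent to a chosen endpoint. I would then tabulate the sparse subsets of $P_n$ for $n\le 5$, as these govern the small cases.

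For the upper bound, let $\pi$ be a $2$-fair coalition partition; the case $n=1$ is $K_2$ and is handled directly (value $2$), so assume $n\ge 2$. Since a $2$-fair dominating set has at least $n\ge 2$ vertices, no block of $\pi$ is a $2$-fair dominating set with exactly $2$ vertices, so every block forms a coalition and we may form the coalition graph $H$ on the blocks. If a block $A$ coalesces with two distinct blocks $B,C$, then all leaves lie in $A\cup B$ and in $A\cup C$; as the blocks are disjoint, a leaf outside $A$ would lie in $B\cap C=\emptyset$, so $A$ contains every leaf. Only one block contains $\ell_1$, hence $H$ has at most one vertex of degree $\ge 2$; if there is none then $H$ is a matching, and two disjoint matched pairs cannot each contain all the leaves, forcing $|\pi|=2$. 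Otherwise $H$ is a star centred at the leaf-block $M$, and Lemma~\ref{lem:5-2} gives $\deg_H(M)\le\Delta(P_n\circ K_1)-2+2=3$, so $|\pi|\le 4$. To exclude $|\pi|=4$: such an $M$ coalesces with three blocks $V_1,V_2,V_3$ partitioning $\{v_1,\dots,v_n\}\setminus T_M$ into non-empty sets, and by the structural criterion each $V_i\cup V_j$ is sparse while $V_1\cup V_2\cup V_3$ is not; setting $S_1=V_1\cup V_2$, $S_2=V_2\cup V_3$, we get sparse $S_1,S_2$ with $S_1\cap S_2=V_2\ne\emptyset$ and $S_1\triangle S_2=V_1\cup V_3$ sparse but $S_1\cup S_2$ not sparse, which I claim is impossible --- a ``forced chain'' argument shows that if two sparse subsets of a path share a vertex and their union is not sparse, then their symmetric difference is not sparse either. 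Hence $C_{2f}(P_n\circ K_1)\le 3$ for every $n\ge 2$.

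For the lower bounds, a size-$2$ partition $\{V\setminus\{v_j\},\{v_j\}\}$ with $v_j$ interior works for all $n\ge 3$ (and $\{\{v_1\},\{\ell_1\}\}$ for $n=1$). When the claimed value is $3$ I would exhibit $\{\{\ell_1,\ell_2,\ell_3,\ell_4\},\{v_1,v_4\},\{v_2,v_3\}\}$ for $n=4$, and for $n\ge 6$ the partition $\{M,X,Y\}$ where $M$ holds all the leaves (together with $v_{n-1}$ when $n$ is odd) and $X,Y$ split the remaining path vertices according to the periodic pattern $v_1\mid v_2v_3\mid v_4v_5\mid\cdots$ with alternately coloured blocks, arranged so $X$ and $Y$ are sparse while $M$ is not; then $M\cup X$ and $M\cup Y$ are $2$-fair dominating and $\{M,X,Y\}$ is valid. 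For the remaining small values one checks against the upper-bound argument, using the list of sparse subsets of $P_n$, that the forced ``star'' configuration never reaches size $3$. The main obstacle is exactly this finite-but-fiddly analysis: both the exclusion of $|\pi|=4$ and the resolution of the small cases reduce to deciding when unions and symmetric differences of sparse subsets of a short path are again sparse, and the endpoint vertices $v_1$ and $v_n$ --- which obey a local rule different from the interior vertices --- are what make the small cases irregular and so need careful handling.
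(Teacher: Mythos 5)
Your structural reduction is sound and genuinely different from the paper's argument: the observation that every $2$-fair dominating set of $P_n\circ K_1$ must contain all $n$ leaves, the resulting characterisation of $2$-fair dominating sets by ``sparse'' complements on the path, the star-shaped coalition graph centred at the leaf-block, and the exclusion of $|\pi|=4$ via your claim about sparse sets are all correct (the claim can be proved cleanly: in a maximal run of length at least $3$ inside $V_1\cup V_2\cup V_3$, any three consecutive vertices must carry three distinct labels, and then the first vertex of the run is isolated in the union of its own class with the third class, which is forbidden for an interior vertex). The paper instead argues by an ad hoc case analysis on $|N(v)\cap S_1|$ and simply declares the cases $n\le 5$ trivial.

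The genuine gap sits exactly in the one step you defer: the ``finite-but-fiddly'' verification of the small cases, which you assert ``never reaches size $3$.'' If you actually run your own sparse-set criterion, you find that the theorem as printed is false for $n=2$ and $n=5$, so that verification cannot succeed. For $n=5$ take $M=\{\ell_1,\dots,\ell_5,v_4,v_5\}$, $V_1=\{v_1\}$, $V_2=\{v_2,v_3\}$: the complements $\{v_2,v_3\}$ and $\{v_1\}$ are sparse, so $M\cup V_1$ and $M\cup V_2$ are $2$-fair dominating, while $\{v_1,v_2,v_3\}$ is not sparse, so $M$ is not; hence this is a valid $2$-fair coalition partition of size $3$ and $C_{2f}(P_5\circ K_1)=3$, not $2$. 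Likewise $P_2\circ K_1\cong P_4$, and the paper's own Corollary \ref{cor:4-14} gives $C_{2f}(P_4)=3$ (realised here by $\{\{\ell_1,\ell_2\},\{v_1\},\{v_2\}\}$), contradicting the value $2$ claimed for $n=2$. So your upper bound $C_{2f}(P_n\circ K_1)\le 3$ for $n\ge 2$ is right, but the correct values are $2$ for $n\in\{1,3\}$ and $3$ for $n=2$ and all $n\ge 4$; no proof of the statement as printed can be completed, and the paper's proof hides the same error behind ``the results are trivial for $n\le 5$.'' (A minor further point: your explicit size-$2$ lower-bound construction requires an interior vertex and so does not cover $n=2$, but that case is moot given the above.)
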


\begin{proof}
	Let $\mathcal{X} = \{S_1, S_2, \ldots\}$ be a $2$-fair coalition partition of the graph. If all degree-one vertices are not contained within a single set, then only the two sets containing these vertices can form a valid $2$-fair coalition. Therefore, we assume that all degree-one vertices are placed into a single set.
		Without loss of generality, let $S_1$ be the set containing all degree-one vertices. Consequently, all other sets can only form a coalition with $S_1$.
		If $v_1 \notin S_1$, then $S_1$ can only form a coalition with the set containing either vertex $v_1$ or $v_2$, which implies:
	\[
	C_{2f}(P_n \circ K_1) \leq 3.
	\]
	A similar argument holds for the vertex $v_n$ (see the following figure).
		We therefore consider $S_1$ to be the set containing all degree-one vertices along with the vertices $v_1$ and $v_n$.
	\begin{center}
		\begin{tikzpicture}[
			node distance=1cm and 1.5cm,
			main node/.style={circle, draw, minimum size=5mm, inner sep=1pt},
			leaf node/.style={circle, draw, fill=gray!20, minimum size=5mm, inner sep=1pt}
			]
			
			\node[main node] (v1) {\( v_1 \)};
			\node[main node, right=of v1] (v2) {\( v_2 \)};
			\node[main node, right=of v2] (v3) {\( v_3 \)};
			\node[main node, right=of v3] (vn) {\( v_n \)};
			
			\node[leaf node, above=of v1] (u1)[above] {\( u_1 \)};
			\node[leaf node, above=of v2] (u2) {\( u_2 \)};
			\node[leaf node, above=of v3] (u3) {\( u_3 \)};
			\node[leaf node, above=of vn] (u4) {\( u_n \)};
			
			\draw (v1) -- (v2);
			\draw (v2) -- (v3);
			\draw [dotted] (v3) -- (vn);
			
			\draw (v1) -- (u1);
			\draw (v2) -- (u2);
			\draw (v3) -- (u3);
			\draw (vn) -- (u4);
								\end{tikzpicture}
		 	\end{center}
	The results are trivial for $n \leq 5$. For $n=4$, we present a $2$-fair coalition partition with three sets in the following figure.
	\begin{center}
		\begin{tikzpicture}[
			node distance=1cm and 1.5cm,
			main node/.style={circle, draw, minimum size=5mm, inner sep=1pt},
			leaf node/.style={circle, draw, fill=gray!100, minimum size=5mm, inner sep=1pt}
			]
			
			\node[main node,fill=gray!10] (v1) {\( v_1 \)};
			\node[main node,fill=gray!50, right=of v1] (v2) {\( v_2 \)};
			\node[main node,fill=gray!50, right=of v2] (v3) {\( v_3 \)};
			\node[main node,fill=gray!10, right=of v3] (v4) {\( v_4 \)};
			
			\node[leaf node, above=of v1] (u1)[above] {\( u_1 \)};
			\node[leaf node, above=of v2] (u2) {\( u_2 \)};
			\node[leaf node, above=of v3] (u3) {\( u_3 \)};
			\node[leaf node, above=of v4] (u4) {\( u_4 \)};
			
			\draw (v1) -- (v2);
			\draw (v2) -- (v3);
			\draw (v3) -- (v4);
			
			\draw (v1) -- (u1);
			\draw (v2) -- (u2);
			\draw (v3) -- (u3);
			\draw (v4) -- (u4);
			
		\end{tikzpicture}
		\begin{center}
	\end{center} 
	\end{center}
	
	Since $S_1$ is not a $2$-fair dominating set, there exists a vertex $v$ such that $|N(v) \cap S_1| \neq 2$. If there exists a vertex for
	 which $|N(v) \cap S_1| = 3$, then $S_1$ can only form a coalition with the set containing $v$, implying $C_{2f}(P_n \circ K_1) \leq 2$.
		Now, suppose $|N(v) \cap S_1| = 1$. This implies there exist three consecutive vertices on the path $P_n$ that are not in $S_1$. Let $v_{i-1}, v_i, v_{i+1}$ be the first three such vertices (with $v_{i-2} \in S_1$). The set $S_1$ can only form a coalition with a set containing one of $v_{i-1}$, $v_{i+1}$, or with the set containing $v_i$.
		Let $S'$ be the set containing vertex $v_i$. If it forms a coalition with $S_1$, then it must also contain vertex $v_{i-1}$; otherwise, we would have $|N(v_{i-1}) \cap (S_1 \cup S')| \neq 2$. Consequently, $S_1$ can form a coalition with at most one other set (e.g., one containing $v_{i+1}$). Hence:
	\[
	C_{2f}(P_n \circ K_1) \leq 3.
	\]
	If $S'$ does not form a coalition with $S_1$, the same upper bound of $3$ holds. 	
	To complete the proof, we construct a $2$-fair coalition partition for $n \geq 6$ as follows:
	\begin{align*}
		S_1 &= \{ u_1, u_2, \ldots, u_n \} \cup \{v_1, v_6, v_7, \dots, v_n\}, \\
		S_2 &= \{v_2, v_3\}, \\
		S_3 &= \{v_4, v_5\}.
	\end{align*}
	It can be verified that this partition satisfies the condition of a $2$-fair coalition, which completes the proof.\qed
	
\end{proof}

\medskip

A similar result holds for the $C_n\circ K_1$, which we present now.

\begin{theorem}
	The $2$-fair coalition number of the corona product $C_n \circ K_1$ is given by:
	\[
	C_{2f}(C_n \circ K_1) = 
	\begin{cases}
		4, & \text{if } n = 3; \\
		3, & \text{if } n \geq 4.
	\end{cases}
	\]
\end{theorem}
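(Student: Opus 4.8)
The plan is to combine the general upper bound of Theorem~\ref{thm:5-6} with explicit constructions, after first pinning down the structure of $2$-fair dominating sets of $H:=C_n\circ K_1$. Since $\delta(H)=1$ (witnessed by the pendants $u_1,\dots,u_n$) and $\Delta(H)=3$, and $k=2>\delta(H)$, Theorem~\ref{thm:5-6} gives $C_{2f}(H)\le\Delta(H)-2+3=4$ for every $n\ge3$. The structural fact I would isolate first is that every $2$-fair dominating set $S$ of $H$ contains \emph{all} pendants, because a pendant $u_i\notin S$ has only one neighbor and so cannot have exactly $2$ neighbors in $S$; consequently, for a cycle vertex $v_i\notin S$ the condition $|N(v_i)\cap S|=2$ reduces to ``exactly one of $v_{i-1},v_{i+1}$ lies in $S$'', so the cycle vertices \emph{missing} from $S$ form a disjoint union of pairwise non-adjacent dominoes $\{v_j,v_{j+1}\}$ (or $S=V$). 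This characterizes all $2$-fair dominating sets of $H$.

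For $n=3$ I would exhibit $\pi=\{\{u_1,u_2,u_3\},\{v_1\},\{v_2\},\{v_3\}\}$: no part is $2$-fair dominating, while $\{u_1,u_2,u_3\}\cup\{v_i\}$ is $2$-fair dominating for each $i$, so every $\{v_i\}$ forms a $2$-fair coalition with $\{u_1,u_2,u_3\}$; with the bound $4$ this gives $C_{2f}(C_3\circ K_1)=4$. For $n\ge4$ the bound $C_{2f}(H)\ge3$ is witnessed by $S_1=\{u_1,\dots,u_n\}\cup\{v_5,\dots,v_n\}$, $S_2=\{v_1,v_2\}$, $S_3=\{v_3,v_4\}$, for which $S_1\cup S_2$ and $S_1\cup S_3$ are $2$-fair dominating but $S_1,S_2,S_3$ are not; so the real content for $n\ge4$ is the upper bound $C_{2f}(H)\le3$.

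For that I would follow the companion argument of Theorem~\ref{2fcnp}: in a $2$-fair coalition partition $\mathcal X$ with $|\mathcal X|\ge3$ all pendants lie in a single part $S_1$ (if they are spread over $\ge3$ parts then no two parts together contain all pendants, so no coalition exists; if they are spread over exactly two parts $S_1,S_2$ then $S_1\cup S_2$ is the only possible coalition, so $|\mathcal X|\le2$), hence every other part can coalesce only with $S_1$. I would then inspect a cycle vertex $v_i\notin S_1$ witnessing that $S_1$ is not $2$-fair dominating: if $|N(v_i)\cap S_1|=3$, only the part containing $v_i$ can coalesce with $S_1$, so $|\mathcal X|\le2$; otherwise $|N(v_i)\cap S_1|=1$ and $v_{i-1},v_{i+1}\notin S_1$, and any coalition partner of $S_1$ must contain exactly one of $v_{i-1},v_{i+1}$ unless it contains $v_i$ — since $v_{i-1}$, $v_{i+1}$, $v_i$ each lie in a unique part, this already bounds the coalition partners of $S_1$ by three and gives $|\mathcal X|\le4$. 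To push this down to $3$, I would feed the domino characterization into the sets $S_1\cup S_2$, $S_1\cup S_3$, $S_1\cup S_4$ and try to derive a contradiction from the forced placement of the missing cycle vertices around $v_{i-1},v_i,v_{i+1}$.

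I expect this last step to be the crux and the main obstacle, and I would flag it as delicate, since its outcome is sensitive to $n\bmod 3$. For $n=4$ and $n=5$ the forced dominoes collide immediately, so $|\mathcal X|\le3$ follows; but when $3\mid n$ the period-$3$ configuration $S_1=\{u_1,\dots,u_n\}$, $S_2=\{v_j: j\equiv1\},\ S_3=\{v_j: j\equiv2\},\ S_4=\{v_j: j\equiv0\pmod3\}$ has the property that $S_1\cup S_i$ is $2$-fair dominating for $i=2,3,4$ while no part is, so it is a genuine fourth part. I would therefore first settle $n\in\{4,5\}$ by hand, then handle $n\ge6$ via the domino argument while checking carefully whether the collision really persists for every $n\ge6$ — since for $3\mid n$ with $n\ge6$ the construction above gives $C_{2f}(C_n\circ K_1)=4$ rather than $3$, so the stated bound seems to need the restriction $3\nmid n$ (with the value $4$ on $\{3\}\cup\{n\ge6:3\mid n\}$).
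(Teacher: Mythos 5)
Your construction and your reduction of the problem to the placement of the pendant vertices match the paper's argument exactly: the paper also forces all degree-one vertices into a single part $S_1$, observes every other part can only coalesce with $S_1$, and then invokes ``a similar argument to the proof of Theorem~\ref{2fcnp}'' to claim $C_{2f}(C_n\circ K_1)\le 3$ for $n\ge 5$. The important point is that the concern you flag at the end is not merely a delicate step --- it is a genuine counterexample to the stated theorem. For $n=6$, take $S_1=\{u_1,\dots,u_6\}$, $S_2=\{v_1,v_4\}$, $S_3=\{v_2,v_5\}$, $S_4=\{v_3,v_6\}$. None of these is a $2$-fair dominating set ($v_1$ has only one neighbor in $S_1$, and $u_2$ has no neighbor in $S_2$), while each $S_1\cup S_j$ is: every cycle vertex outside $S_1\cup S_j$ sees its own pendant plus exactly one of its two cycle neighbors in $S_j$. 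Together with the upper bound $\Delta-k+3=4$ from Theorem~\ref{thm:5-6}, this gives $C_{2f}(C_6\circ K_1)=4$, not $3$, and the same period-$3$ partition works for every $n\equiv 0\pmod 3$, $n\ge 6$. So the correct statement appears to be $C_{2f}(C_n\circ K_1)=4$ when $3\mid n$ and $3$ otherwise, exactly as you suspect.

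It is worth recording precisely where the paper's ``similar argument'' breaks, since you will need to avoid the same trap. The path proof of Theorem~\ref{2fcnp} locates the \emph{first} run of three consecutive path vertices missing from $S_1$ and uses the vertex $v_{i-2}\in S_1$ immediately preceding it to force any coalition partner containing $v_i$ to also contain $v_{i-1}$; this collapses the three potential partners to two. On the cycle this anchor vertex need not exist: when $S_1$ consists of the pendants alone, \emph{no} cycle vertex lies in $S_1$, there is no ``first'' run, and the three residue classes mod $3$ are three legitimate, pairwise-distinct coalition partners of $S_1$. Your plan of characterizing the $2$-fair dominating sets of $C_n\circ K_1$ (all pendants present, missing cycle vertices forming non-adjacent dominoes, plus the all-pendants-only degenerate case when the cycle vertices alternate appropriately) is the right way to make the case analysis airtight; just be sure to treat $W=S_1\cap V(C_n)=\emptyset$ as a separate case, since that is exactly where the claimed bound of $3$ fails.
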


\begin{proof}
	Using a similar argument to the proof of Theorem \ref{2fcnp}, the vertices of degree one must be in the same set, and for $n \geq 5$ we conclude that:
	\[
	C_{2f}(P_n \circ K_1) \leq 3.
	\]
	To prove equality, we present appropriate $2$-fair coalition partitions for different cases.
	
	\noindent{Case $n = 3$:}
	\begin{center}
		\begin{tikzpicture}[scale=1]
			
			\pgfmathsetmacro{\r}{1}
			\pgfmathsetmacro{\d}{1}
			
			\draw[thick] (0,0) circle (\r);
			
			\foreach \angle/\label/\col in {0/1/gray, 120/2/gray, 240/3/gray!50, 0/1/gray!10}
			{
				\node[circle, draw=black, fill=\col, minimum size=0.4cm, inner sep=0] (main-\angle) at (\angle:\r) {$v_\label$};
				
				\node[circle, draw=black, minimum size=0.4cm, inner sep=0] (outer-\angle) at (\angle:\r + \d) {$u_\label$};
				
				\draw (main-\angle) -- (outer-\angle);
			}
			
		\end{tikzpicture}
	\end{center}	
	
	\[
	\mathcal{X} = \left\{ \{u_1, u_2, u_3\}, \{v_1\}, \{v_2\}, \{v_3\} \right\}.
	\]
	
	\noindent{Case $n = 4$:} 
	\begin{center}
		\begin{tikzpicture}[scale=1]
			
			\pgfmathsetmacro{\r}{1}
			\pgfmathsetmacro{\d}{1}
			
			\draw[thick] (0,0) circle (\r);
			
			\foreach \angle/\label/\col in {90/2/gray!50, 180/3/gray, 270/4/gray, 0/1/gray!50}
			{
				\node[circle, draw=black, fill=\col, minimum size=0.4cm, inner sep=0] (main-\angle) at (\angle:\r) {$v_\label$};
				
				\node[circle, draw=black, fill=gray!10, minimum size=0.4cm, inner sep=0] (outer-\angle) at (\angle:\r + \d) {$u_\label$};
				
				\draw (main-\angle) -- (outer-\angle);
			}
			
		\end{tikzpicture}
	\end{center}
	
	\[
	\mathcal{X} = \left\{ \{u_1, u_2, u_3, u_4\}, \{v_1, v_2\}, \{v_3, v_4\} \right\}
	\]
	
	\noindent{Case $n \geq 5$:}
	\begin{center}
		\begin{tikzpicture}[scale=1]
			
			\pgfmathsetmacro{\r}{2}
			\pgfmathsetmacro{\d}{1}
			
			\draw[thick] (0,0) circle (\r);
			
			\foreach \angle/\label/\col in { 0/1/gray!50, 60/2/gray!50, 120/3/gray, 180/4/gray, 240/5/white, 300/n/white}
			{
				\node[circle, draw=black, fill=\col, minimum size=0.4cm, inner sep=0] (main-\angle) at (\angle:\r) {$v_\label$};
				
				\node[circle, draw=black, fill=white, minimum size=0.4cm, inner sep=0] (outer-\angle) at (\angle:\r + \d) {$u_\label$};
				
				\draw (main-\angle) -- (outer-\angle);
			}
			
			\draw[dashed, white, line width=2pt] (240:\r) arc (240:300:\r);
			
		\end{tikzpicture}
	\end{center}
	
	\[
	\mathcal{X} = \left\{ \{u_1, u_2, \dots, u_n, v_5, \dots, v_n\}, \{v_1, v_2\}, \{v_3, v_4\} \right\}.
	\]
	It can be verified that these partitions satisfy the conditions of a $2$-fair coalition, thus proving that $C_{2f}(P_n \circ K_1) = 3$ for $n \geq 3$.
	
\end{proof}

\section{Graphs with large $k$-fair coalition number}

Identifying graphs of order $n$ for which the $2$-fair coalition number is either $n$ or $n-1$ is an interesting problem. In this section, we study graphs with a large $2$-fair coalition number.
We need the following theorems:

\begin{theorem}\label{th9}{\rm\cite{Aust}}
	If $C_2(G)=n$, then  $\deg(v)\geq n-2$ for every vertex $v$.
\end{theorem}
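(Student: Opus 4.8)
The plan is to exploit the fact that the equality $C_2(G)=n$ pins down the coalition partition completely. First I would observe that any partition of an $n$-element set into $n$ parts must consist entirely of singletons; hence if $C_2(G)=n$, then the $2$-coalition partition achieving this value is necessarily $\pi=\{\{v_1\},\{v_2\},\ldots,\{v_n\}\}$.

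Next I would record the elementary observation that no singleton $\{v\}$ can be a $2$-dominating set, since every vertex outside $\{v\}$ has at most one neighbor in $\{v\}$. Consequently, in the partition $\pi$ each part $\{v_i\}$ can satisfy the definition of a $2$-coalition partition only by forming a $2$-coalition with some other part $\{v_j\}$; that is, $\{v_i,v_j\}$ must be a $2$-dominating set of $G$ (the requirement that neither $\{v_i\}$ nor $\{v_j\}$ be $2$-dominating is then vacuous).

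Finally I would unfold the meaning of ``$\{v_i,v_j\}$ is a $2$-dominating set'': every vertex $w\notin\{v_i,v_j\}$ has at least two neighbors in $\{v_i,v_j\}$, so $w$ is adjacent to both $v_i$ and $v_j$. In particular $v_i$ is adjacent to every vertex of $V\setminus\{v_i,v_j\}$, whence $\deg(v_i)\geq n-2$. Since $v_i$ was arbitrary, the bound holds for all vertices.

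The argument is short and I do not anticipate a genuine obstacle; the only points requiring care are the combinatorial fact that an $n$-part partition of an $n$-element set consists of singletons, and the verification that the ``neither part is $2$-dominating'' clause is automatically satisfied by singletons, both of which are immediate.
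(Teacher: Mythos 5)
The paper does not prove this statement; it is imported from \cite{Aust} and used as a black box, so there is no in-paper proof to compare against. Your argument is correct and is the natural one: an $n$-part partition of $V$ forces all parts to be singletons, a singleton is never a $2$-dominating set (and in particular never a $2$-dominating set of cardinality $2$, which is what the first clause of the partition definition would require), so each $\{v_i\}$ must pair with some $\{v_j\}$ into a $2$-dominating set $\{v_i,v_j\}$, and then every $w\notin\{v_i,v_j\}$ needs two neighbors in a two-element set, hence is adjacent to both, giving $\deg(v_i)\geq n-2$. The degenerate cases $n\leq 2$ are vacuous since the claimed bound is then $\leq 0$. I see no gap.
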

	\begin{theorem}\label{bound}{\rm\cite{Aust}}
		For any tree $T$ of order $n$, $C_2(T)\leq \frac{n}{2}+1$.
	\end{theorem}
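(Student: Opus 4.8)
The plan is to exploit the rigidity of $2$-domination on trees. Every leaf $\ell$ of $T$ has a unique neighbour, so if $\ell$ is not in a set $D$ it cannot have two neighbours in $D$; hence every $2$-dominating set of $T$ contains the set $L$ of all leaves. Consequently, in a $2$-coalition partition $\pi=\{V_1,\dots,V_m\}$, every self-$2$-dominating member contains $L$, and every coalition $V_i\cup V_j$ contains $L$. For $n\ge 2$ the tree $T$ has at least two leaves, so at most one member of $\pi$ can contain $L$; when such a member exists I would call it $a$. The cases $m=C_2(T)\le 2$ give $C_2(T)\le 2\le n/2+1$ once $n\ge2$, and $n=1$ is trivial, so from now on I assume $m\ge 3$ (hence $n\ge 3$).

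Next I would determine the structure of the coalition graph $CG(T,\pi)$. If a member $V_i$ formed coalitions with two distinct members $V_j,V_k$, then $L\setminus V_i$ would lie in both $V_j$ and $V_k$, hence be empty, so $L\subseteq V_i$; thus only $a$ can have degree $\ge 2$ in $CG(T,\pi)$. If no member contained $L$, then every member would have $CG$-degree $\le1$ and none would be self-$2$-dominating, forcing $CG(T,\pi)$ to be a disjoint union of edges on $m\ge3$ vertices; this is impossible, since either $m$ is odd, or two of these edges have unions each containing $L$, giving $L=\varnothing$. So $a$ exists. It cannot be self-$2$-dominating (it has $CG$-degree $\ge2$), and a coalition between two members $\ne a$ would place $L$ inside a set disjoint from $a\supseteq L$; hence every member other than $a$ has $a$ as its unique coalition partner. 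Therefore $CG(T,\pi)$ is a star centred at $a=V_1$, and for each $j\in\{2,\dots,m\}$ the set $V_1\cup V_j$ is $2$-dominating while $V_1$ is not.

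Now the counting. Let $B$ be the nonempty set of vertices not $2$-dominated by $V_1$; each $x\in B$ satisfies $x\notin V_1$ and $|N(x)\cap V_1|\le1$. Fix $x\in B$ with $x\in V_{j_0}$. For every $j\in\{2,\dots,m\}\setminus\{j_0\}$, since $V_1\cup V_j$ is $2$-dominating and $x$ lies outside it, $x$ has a neighbour $w_j\in V_j$; as $w_j\notin V_1\supseteq L$, the vertex $w_j$ is not a leaf, so the component of $T-x$ containing $w_j$ has at least two vertices. Since neighbours of $x$ lie in distinct components of $T-x$, these $m-2$ components are distinct, which yields $n\ge 1+2(m-2)=2m-3$. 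The heart of the argument is upgrading this to $n\ge 2m-2$, i.e. $C_2(T)\le n/2+1$: I would show that $n=2m-3$ forces $T$ to be a spider with centre $x$ and $m-2$ legs $x\,w_j\,\ell_j$ of length two, with $V_1=L$, $V_{j_0}=\{x\}$ and $V_j=\{w_j\}$; then $V_2=\{w_2\}$ is not $2$-dominating and, by the star structure, its only candidate partner is $V_1$, yet $V_1\cup V_2=L\cup\{w_2\}$ fails to $2$-dominate $x$ (which has at most the one neighbour $w_2$ in it), a contradiction. For $m=3$ the case $n=3$ is excluded even more directly, since then $x$ would be a leaf lying outside $V_1$.

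The main obstacle is precisely this last step: the naive branch count only gives $C_2(T)\le n/2+3/2$, and recovering the sharp constant requires pinning down the extremal configuration exactly and using that every $V_j$ genuinely forms a coalition with $V_1$, not merely that $V_1$ fails to $2$-dominate some vertex. Secondary care is needed for the boundary cases $m\le2$ and $n\le2$ and for ruling out that $a$ is self-$2$-dominating (in which case $\pi=\{a\}$).
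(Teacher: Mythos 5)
This statement is quoted from \cite{Aust} and the present paper contains no proof of it, so there is nothing internal to compare your argument against; I can only assess it on its own terms, and it checks out as a correct, self-contained proof. Your key observations are all sound: every $2$-dominating set of a tree must contain the set $L$ of leaves (a leaf outside the set cannot have two neighbours in it), so at most one block of a $2$-coalition partition can meet the ``contains $L$'' condition; this forces the coalition graph to be a star centred at the unique block $a\supseteq L$ once $m\ge 3$, with $a$ not itself $2$-dominating. The counting step is also right: for $x\in V_{j_0}$ not $2$-dominated by $V_1=a$, each of the $m-2$ blocks $V_j$ ($j\ne 1,j_0$) must supply a non-leaf neighbour $w_j$ of $x$, and these lie in distinct components of $T-x$ of size at least $2$, giving $n\ge 2m-3$; your extremal analysis showing $n=2m-3$ forces a spider in which some singleton $\{w_j\}$ has no valid partner (its only candidate $L\cup\{w_j\}$ leaves $x$ with a single neighbour) correctly upgrades this to $n\ge 2m-2$, i.e.\ $C_2(T)\le n/2+1$, and you correctly dispose of the $m=3$, $n=3$ case where $x$ would be a leaf. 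Two cosmetic points: the parenthetical ``(it has $CG$-degree $\ge 2$)'' is asserted before the clause that justifies it (namely that all $m-1\ge 2$ other blocks must partner with $a$, which is what rules out $a$ being a $2$-dominating set in either sense of the partition definition), and in the final contradiction you should take $V_j=\{w_j\}$ for some $j\notin\{1,j_0\}$ rather than literally $V_2$, since $j_0$ could equal $2$. Neither affects correctness.
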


Similar to Theorem \ref{th9}, it is easy to see that if $C_{2f}(G) = n$ for a graph $G$, then $\deg(v) \geq n-2$ for every vertex $v$. Furthermore, if $v_0$ is a vertex with $\deg(v_0) = n-2$, then $v_0$ can only form a $2$-fair coalition with the single vertex it is not adjacent to. Consequently, the number of vertices of degree $n-2$ must be even and so we have the following corollary.

\begin{corollary}
	If $G$ is an $(n-2)$-regular graph of order $n$ with $C_{2f}(G)=n$, then $n$ is even and $G$ is isomorphic to graph $K_n \setminus M$, where $M$ is a perfect matching of the complete graph $K_n$. 
\end{corollary}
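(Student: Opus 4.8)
The plan is to use the observation stated immediately before the corollary as a black box and then extract the matching structure by hand. First I would record that a partition of $V(G)$ into $n$ parts forces every part to be a singleton, so $C_{2f}(G)=n$ means there is a $2$-fair coalition partition $\pi=\{\{v_1\},\dots,\{v_n\}\}$; by the preceding remark every vertex of $G$ then has degree at least $n-2$. Since $G$ is $(n-2)$-regular, this upgrades to the statement that every vertex of $G$ has degree \emph{exactly} $n-2$. The same remark says that in a graph with $C_{2f}=n$ the set of vertices of degree $n-2$ has even cardinality; here that set is all of $V(G)$, so $|V(G)|=n$ is even, which disposes of the parity claim.

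Next I would identify $G$ with $K_n$ minus a perfect matching. For each vertex $v$, the condition $\deg(v)=n-2$ means $v$ fails to be adjacent to exactly one other vertex; call it $m(v)$, so $m(v)\neq v$. Non-adjacency is a symmetric relation, hence $m(m(v))=v$, i.e. $m$ is a fixed-point-free involution on $V(G)$. Such an involution partitions $V(G)$ into $n/2$ unordered pairs $\{v,m(v)\}$, and the set $M:=\{\,vm(v):v\in V(G)\,\}$ is a perfect matching of $K_n$. By construction the non-edges of $G$ are precisely the pairs $\{v,m(v)\}$, i.e. exactly the edges of $M$; therefore $G=K_n\setminus M$, which is the desired conclusion (and is automatically $(n-2)$-regular, consistent with the hypothesis).

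There is essentially no hard step here: once the preceding remark is granted, the only content is recognising that the ``unique non-neighbour'' map on an $(n-2)$-regular graph is a fixed-point-free involution and hence encodes a perfect matching. The one point deserving a line of care is checking that the remark applies verbatim — that in any $2$-fair coalition partition of an $n$-vertex graph attaining size $n$, a vertex of degree $n-2$ can coalesce only with its unique non-neighbour, forcing the degree-$(n-2)$ vertices to pair up — but this is precisely what we are allowed to assume. Finally, although the corollary is phrased as a one-directional implication, I would note in passing that the converse also holds: for even $n$ the all-singletons partition of $K_n\setminus M$ works, since for every $v$ the pair $\{v,m(v)\}$ is a $2$-fair dominating set while no singleton is, so $C_{2f}(K_n\setminus M)=n$; this is a useful consistency check but is not needed for the statement as written.
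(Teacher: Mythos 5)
Your proposal is correct and follows essentially the same route as the paper, which derives the corollary directly from the preceding remark: in a size-$n$ partition all parts are singletons, each degree-$(n-2)$ vertex can only coalesce with its unique non-neighbour, so these vertices pair up, forcing $n$ even and the non-edges to form a perfect matching. Your explicit formalisation of the unique-non-neighbour map as a fixed-point-free involution, and the converse check for $K_n\setminus M$, are details the paper leaves implicit but add nothing contrary to its argument.
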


\begin{lemma}\label{lem:4-19}
	For any tree $T$ of order $n$, we have:
	\[
	C_{2f}(T) \leq \left\lfloor \frac{n}{2} \right\rfloor + 1
	\]
\end{lemma}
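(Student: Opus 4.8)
The plan is to combine the already-available upper bound $C_{2f}(T)\le\Delta(T)+1$ from Lemma~\ref{lem:5-8} with a separate, counting-based argument that handles trees of small maximum degree, so that the two estimates together never exceed $\lfloor n/2\rfloor+1$. The bound $\Delta(T)+1$ is good exactly when $\Delta(T)$ is small relative to $n$; the complementary regime, where $T$ has a high-degree vertex, is where a more delicate argument is needed, and it is the main obstacle.

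First I would recall the analogue in the ordinary $2$-coalition setting, Theorem~\ref{bound}, which gives $C_2(T)\le n/2+1$, and note that its proof (from \cite{Aust}) rests on the observation that in a tree, any $2$-coalition partition cannot have too many singleton blocks, because a singleton $\{v\}$ can only coalesce with a block that supplies $v$ with a second neighbor, and leaves in $T$ constrain this severely. I would adapt that bookkeeping to the $k=2$ fair setting: if $\{v\}$ is a singleton block, then for $\{v\}$ to be a coalition partner of some block $B$, every vertex outside $\{v\}\cup B$ must have exactly $2$ neighbors there, and in particular $v$ itself (if $v\notin B$) must have exactly $2$ neighbors in $\{v\}\cup B$, forcing $v$ to have a neighbor in $B$ and pinning down structure around $v$. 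The key quantitative step is: letting $t$ be the number of blocks of $\pi$ that are \emph{not} singletons, the non-singleton blocks use at least $2t$ vertices, so the number $s$ of singleton blocks satisfies $s\le n-2t$ and $|\pi|=s+t\le n-t$. It then suffices to show $t\ge\lceil n/2\rceil-1$, i.e. that a $2$-fair coalition partition of a tree cannot consist almost entirely of singletons; equivalently, that the coalition graph $CG(T,\pi)$ restricted to singleton blocks is sparse. I expect to argue that each singleton block has at most one coalition partner among the singletons (two singletons $\{u\},\{v\}$ form a coalition only if $uv\in E$ and $\{u,v\}$ is a $2$-fair dominating set, which in a tree of order $\ge 3$ is essentially impossible except in tiny cases), so the singletons that coalesce only with other singletons come in pairs, while the remaining singletons each force a distinct non-singleton partner; a careful count of this type yields $s\le 2t+O(1)$, hence $|\pi|\le\lfloor n/2\rfloor+1$.

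The main obstacle, and where I would spend the most care, is making the dichotomy clean: a singleton block $\{v\}$ might coalesce with a non-singleton block $B$, and several singletons could in principle share the same partner $B$ — Lemma~\ref{lem:5-2} bounds the number of partners of $B$ by $\Delta(T)-k+2=\Delta(T)$, which is not small enough by itself. So I would instead localize: fix a vertex $x$ of $T$ witnessing that $B$ is not $2$-fair dominating (so $|N(x)\cap B|\ne 2$), and use the Case~1/Case~2 analysis of Lemma~\ref{lem:5-2} to show that all partners of $B$ must "fill in" the same deficiency at $x$, so they are confined to $N(x)$ and moreover are almost forced to be pairwise structured; in a tree this rigidity should let me charge each singleton partner of $B$ against a private neighbor of $x$, and since $T$ is acyclic these neighbor-sets across different non-singleton blocks overlap in a controlled way. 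Finally I would check the small cases $n\le 5$ by hand (matching the constructions already exhibited for $P_n$ in Corollary~\ref{cor:4-14}, which show the bound is attained, e.g. $C_{2f}(P_4)=3=\lfloor 4/2\rfloor+1$), and assemble: for $\Delta(T)\le\lfloor n/2\rfloor$ use Lemma~\ref{lem:5-8}, and for $\Delta(T)>\lfloor n/2\rfloor$ use the counting argument above, completing the proof. \qed
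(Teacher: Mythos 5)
Your approach is genuinely different from the paper's: the paper disposes of this lemma in one line by invoking Theorem~\ref{bound}, the bound $C_2(T)\le \frac{n}{2}+1$ for the ordinary $2$-coalition number from \cite{Aust}, so no new counting is done at all. You instead attempt a self-contained count of singleton versus non-singleton blocks, which would be more informative if it worked --- but as written the arithmetic does not close. First, your reduction ``it suffices to show $t\ge\lceil n/2\rceil-1$'' rests only on the trivial inequality $|\pi|\le n-t$, and that target is simply false: for $P_n$ with $n$ large, the extremal partition $\{\{v_1,v_4,\dots,v_n\},\{v_2\},\{v_3\}\}$ of Corollary~\ref{cor:4-14} has $t=1$, nowhere near $\lceil n/2\rceil-1$. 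Second, the fallback estimate $s\le 2t+O(1)$ is too weak to imply the lemma: taking $s=2t$ gives $n\ge s+2t=4t$ and $|\pi|=s+t=3t$, which only bounds $|\pi|$ by roughly $3n/4$. What your scheme actually needs is an absolute bound such as $s\le 2$ (then $n\ge s+2t$ yields $|\pi|=s+t\le (n+s)/2\le \lfloor n/2\rfloor+1$), and that is exactly the structural claim your ``charge each singleton partner of $B$ against a private neighbor of $x$'' paragraph gestures at but never establishes.

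So there is a genuine gap: the combinatorial heart of the argument --- why a $2$-fair coalition partition of a tree cannot contain more than a bounded number of singleton blocks, or some equivalent refined count --- is missing, and the two quantitative shortcuts offered in its place are respectively false and insufficient. The case split at the end (Lemma~\ref{lem:5-8} when $\Delta(T)\le\lfloor n/2\rfloor$, counting otherwise) is sound in principle, but the second branch is unsupported. If you want a short correct proof, do what the paper does: reduce to the known tree bound of Theorem~\ref{bound} (justifying why the $2$-fair bound is inherited from the $2$-coalition bound), rather than rebuilding the count from scratch.
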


\begin{proof}
	If follows from Theorem \ref{bound}.\qed
\end{proof}

\begin{corollary}	
	Let $T$ be a tree of order $n$.
	\begin{enumerate}
		\item[(i)] If $C_{2f}(T) = n$, then $T = P_2$.
		\item[(ii)] If $C_{2f}(T) = n - 1$, then $T = P_3$ or $T = P_4$.
	\end{enumerate}
\end{corollary}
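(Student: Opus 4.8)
The plan is to use Lemma \ref{lem:4-19}, which gives $C_{2f}(T) \leq \lfloor n/2 \rfloor + 1$ for any tree $T$ of order $n$, as the main lever, and then to rule out all but the claimed small trees by a counting argument combined with a direct check.

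For part (i), suppose $C_{2f}(T) = n$. Combining with Lemma \ref{lem:4-19} gives $n \leq \lfloor n/2 \rfloor + 1$, which forces $n \leq 2$. Since a tree on $n = 1$ vertex has only the trivial partition (and $C_{2f}(K_1) = 1 = n$ is a degenerate case one should address separately, noting $K_1$ is not usually called $P_2$), the only non-degenerate possibility is $n = 2$, i.e. $T = P_2$; here the partition into two singletons works since each singleton is a $2$-fair dominating set with exactly... wait, actually in $P_2$ a single vertex $\{v_1\}$ is not a $2$FD-set (its neighbor $v_2$ has only one neighbor in the set), so one must verify that $\{v_1\}$ and $\{v_2\}$ form a $2$-fair coalition: $\{v_1,v_2\} = V$ is vacuously a $2$FD-set, so yes. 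Thus $C_{2f}(P_2) = 2 = n$. One should also confirm via Theorem \ref{th9}-style reasoning (the remark preceding the corollary) that $\deg(v) \geq n - 2$ for all $v$ when $C_{2f}(T) = n$, which for a tree again quickly pins down the order.

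For part (ii), suppose $C_{2f}(T) = n - 1$. Lemma \ref{lem:4-19} gives $n - 1 \leq \lfloor n/2 \rfloor + 1$, i.e. $\lfloor n/2 \rfloor \geq n - 2$, which forces $n \leq 4$. So $T$ is a tree on at most $4$ vertices, and since $C_{2f}(T) = n - 1 \geq 2$ we need $n \geq 3$ (ruling out $P_2$, where $n - 1 = 1 < 2 = C_{2f}(P_2)$, and $K_1$). The trees on $3$ vertices: only $P_3$. The trees on $4$ vertices: $P_4$ and the star $K_{1,3}$. For $P_3$ one exhibits the partition achieving size $2 = n - 1$ (e.g. $\{\{v_1\}, \{v_2, v_3\}\}$ or appeal to Corollary \ref{cor:4-14} which gives $C_{2f}(P_3) = 2$). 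For $P_4$, Corollary \ref{cor:4-14} gives $C_{2f}(P_4) = 3 = n - 1$. The remaining case is to show $C_{2f}(K_{1,3}) \neq 3$: here $\Delta = 3$, and by Lemma \ref{lem:5-8}, $C_{2f}(K_{1,3}) \leq \Delta + 1 = 4$, which is not sharp enough, so one must argue directly. In $K_{1,3}$ with center $c$ and leaves $\ell_1, \ell_2, \ell_3$, a $2$FD-set $D$ must satisfy: every vertex outside $D$ has exactly $2$ neighbors in $D$; a leaf has degree $1$, so no leaf can be outside $D$ unless we allow it to have $2$ neighbors in $D$, which is impossible — hence every leaf lies in $D$, so $D \supseteq \{\ell_1,\ell_2,\ell_3\}$, and then $c \notin D$ would need $|N(c) \cap D| = 2 \neq 3$, contradiction; so the only $2$FD-set is $V$ itself. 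Thus in any $2$-fair coalition partition, the only way two parts $A, B$ coalesce is $A \cup B = V$, forcing the partition to have exactly $2$ parts; hence $C_{2f}(K_{1,3}) = 2 \neq 3$. Therefore $T = P_3$ or $T = P_4$.

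The main obstacle is the boundary bookkeeping: the degenerate graph $K_1$ and the exact role of the "exactly $k$ vertices" clause in the definition of a $k$-fair coalition partition (a part that is itself a $2$FD-set is only allowed if it has exactly $2$ vertices), together with verifying that $K_{1,3}$ genuinely fails — this last check is the only place the bound from Lemma \ref{lem:4-19} does not immediately suffice and one has to reason about the structure of $2$FD-sets directly. Everything else is a short finite case analysis leveraging Corollary \ref{cor:4-14} and Lemma \ref{lem:4-19}.
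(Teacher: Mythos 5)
Your proposal is correct and follows essentially the same route as the paper: apply Lemma \ref{lem:4-19} to force $n\le 2$ in part (i) and $n\le 4$ in part (ii), then finish by checking the finitely many small trees via Corollary \ref{cor:4-14}. You are in fact somewhat more careful than the paper, which merely asserts $C_{2f}(K_{1,3})=2$ without the structural argument you give (the only $2$FD-set of $K_{1,3}$ is $V$ itself, so any coalition partition has exactly two parts), and which silently skips the degenerate $n=1$ case in part (i) even though its own case analysis in part (ii) records $C_{2f}(P_1)=1=n$.
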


\begin{proof}
	\begin{enumerate}
		\item[(i)] 
		By Lemma \ref{lem:4-19}, we have $C_{2f}(T) \leq \left\lfloor \frac{n}{2} \right\rfloor + 1$. If $C_{2f}(T) = n$, then it follows that:
		\[
		n \leq \left\lfloor \frac{n}{2} \right\rfloor + 1.
		\]
		This inequality implies $n \leq 2$. The only tree of order $2$ is the path $P_2$.
		
		\item[(ii)]  If $C_{2f}(T) = n - 1$, then  by Lemma \ref{lem:4-19}, we have $C_{2f}(T) \leq \left\lfloor \frac{n}{2} \right\rfloor + 1$. So
		\[
		n - 1 \leq \left\lfloor \frac{n}{2} \right\rfloor + 1.
		\]
		This inequality implies $n \leq 4$. We now check all trees of order $n \leq 4$:
		\begin{itemize}
			\item For $n = 1, 2$: We have $C_{2f}(T) = n$ (the only possible value).
			\item For $n = 3$: The only tree is the path $P_3$. By Corollary \ref{cor:4-14}, $C_{2f}(P_3) = 2 = 3 - 1$.
			\item For $n = 4$: The trees are the path $P_4$ and the star $K_{1,3}$. By Corollary \ref{cor:4-14}, $C_{2f}(P_4) = 3 = 4 - 1$, while $C_{2f}(K_{1,3}) = 2$.
		\end{itemize}
		Therefore, the only trees achieving $C_{2f}(T) = n - 1$ are $P_3$ and $P_4$.\qed
	\end{enumerate}
\end{proof}

\section{ $2$-fair coalition number of cubic graphs}
In this section, we proceed to study the $2$-fair coalition number of cubic ($3$-regular) graphs. A $3$-regular graph on $4$ vertices is the complete graph $K_4$, for which $C_{2f}(K_4) = 4$. All $3$-regular graphs with six vertices are isomorphic to one of the following two graphs:
\begin{center}
	\begin{tikzpicture}[scale=0.5,
		node/.style={circle, draw=black, fill=blue!20, minimum size=0.4cm, inner sep=0, thick},
		edge/.style={thick}
		]
		
		\node[node] (v1) at (90:2.5) {$v_1$};
		\node[node] (v2) at (30:2.5) {$v_2$};
		\node[node] (v5) at (-30:2.5) {$v_5$};
		\node[node] (v4) at (-90:2.5) {$v_4$};
		\node[node] (v6) at (-150:2.5) {$v_6$};
		\node[node] (v3) at (150:2.5) {$v_3$};
		
		\draw[edge] (v1) -- (v2);
		\draw[edge] (v2) -- (v3);
		\draw[edge] (v3) -- (v1);
		\draw[edge] (v4) -- (v5);
		\draw[edge] (v5) -- (v6);
		\draw[edge] (v6) -- (v4);
		\draw[edge] (v1) -- (v4);
		\draw[edge] (v2) -- (v5);
		\draw[edge] (v3) -- (v6);
		
		\node[draw=none, fill=none] (label1) at (270:4) {$G_1$};
		
		\begin{scope}[xshift=10cm]
			\node[node] (v1) at (90:2.5) {$v_1$};
			\node[node] (v2) at (30:2.5) {$v_2$};
			\node[node] (v5) at (-30:2.5) {$v_5$};
			\node[node] (v4) at (-90:2.5) {$v_4$};
			\node[node] (v6) at (-150:2.5) {$v_6$};
			\node[node] (v3) at (150:2.5) {$v_3$};
			
			\draw[edge] (v1) -- (v2);
			\draw[edge] (v1) -- (v3);
			\draw[edge] (v3) -- (v5);
			\draw[edge] (v4) -- (v5);
			\draw[edge] (v2) -- (v6);
			\draw[edge] (v6) -- (v4);
			\draw[edge] (v1) -- (v4);
			\draw[edge] (v2) -- (v5);
			\draw[edge] (v3) -- (v6);
			
			\node[draw=none, fill=none] (label2) at (270:4) {$G_2$};
		\end{scope}
	\end{tikzpicture}\label{cubic6}	
\end{center}

\begin{theorem}\label{thm:4-20}
	Let $G$ be a $3$-regular graph of order $n$. If $X$ is a $2$-fair dominating set of $G$ and $|X| = k$, then $k \geq \frac{2n}{5}$.
\end{theorem}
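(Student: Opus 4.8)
The plan is a straightforward double count of the edges in the cut between $X$ and $V\setminus X$. Write $k=|X|$ and note $|V\setminus X|=n-k$. Let $e(X,V\setminus X)$ denote the number of edges with one endpoint in $X$ and the other in $V\setminus X$.

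First I would count $e(X,V\setminus X)$ from the side of $V\setminus X$. Since $X$ is a $2$-fair dominating set, every vertex $v\in V\setminus X$ satisfies $|N(v)\cap X|=2$, so each such vertex contributes exactly two edges to the cut. Hence
\[
e(X,V\setminus X)=2(n-k).
\]
Next I would bound $e(X,V\setminus X)$ from the side of $X$. Because $G$ is $3$-regular, each vertex of $X$ has degree $3$, so it contributes at most $3$ edges to the cut (with equality only if it has no neighbor inside $X$). Therefore
\[
e(X,V\setminus X)\le 3k.
\]

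Combining the two displays gives $2(n-k)\le 3k$, i.e. $2n\le 5k$, which is exactly $k\ge \tfrac{2n}{5}$. There is essentially no obstacle here: the only things to be careful about are the edge cases $X=V$ (where the bound holds trivially since $k=n\ge 2n/5$) and making sure the $2$-fair condition is used in the correct direction (it pins the $V\setminus X$-side count to $2$, not merely bounds it). One could also remark that equality $k=\tfrac{2n}{5}$ forces every vertex of $X$ to have all three neighbors outside $X$, i.e. $X$ is independent, which connects to the extremal structure but is not needed for the stated inequality.
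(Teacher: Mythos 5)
Your proposal is correct and follows essentially the same argument as the paper: both count the edges of the cut between $X$ and $V\setminus X$, using the $2$-fair condition to get $2(n-k)$ edges on one side and $3$-regularity to bound the count by $3k$ on the other. Your version is slightly more precise (exact count of $2(n-k)$ rather than a lower bound, plus the equality remark), but there is no substantive difference.
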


\begin{proof}
	Since the $n - k$ vertices in $V(G)\setminus X$ must be $2$-fair dominated by $X$, there must be at least $2(n - k)$ edges from $X$ to $V(G) \setminus X$. On the other hand, since $G$ is $3$-regular, each vertex in $X$ has at most $3$ incident edges. Therefore, the total number of edges incident to vertices in $X$ is at most $3k$. This yields the inequality:
	\[
	3k \geq 2(n - k)
	\]
	Solving this inequality completes the proof.\qed 
\end{proof}

\begin{theorem}
	For the $3$-regular graphs $G_1$ and $G_2$ of order $6$, the following results hold:
	\begin{enumerate}
		\item[(i)] $C_{2f}(G_1) = 4.$
		\item[(ii)] $C_{2f}(G_2) = 3.$
	\end{enumerate}
\end{theorem}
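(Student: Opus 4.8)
The plan is to pin down the $2$-fair dominating sets of a cubic graph of order $6$ and then run a short counting argument over the possible partition shapes. The governing observation is that, since $G_1$ and $G_2$ are $3$-regular, a set $X$ is $2$-fair dominating exactly when every vertex of $V\setminus X$ has precisely one neighbor in $V\setminus X$, i.e.\ when $G[V\setminus X]$ is $1$-regular; hence $|V\setminus X|$ is even. For $n=6$ this leaves $|V\setminus X|\in\{0,2,4\}$, and $|V\setminus X|=4$ cannot occur: then $|X|=2$ and each of the four vertices outside $X$ would be adjacent to both vertices of $X$, forcing a vertex of $X$ to have degree $\ge 4$. Therefore, in either graph, a $2$-fair dominating set is either $V$ itself or has size $4$, and in the latter case its complement is a single edge, so it equals $V\setminus e$ for some edge $e$. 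In particular there is no $2$-fair dominating set of size $1,2$ or $3$, so in a $2$-fair coalition partition no part can be a $2$FD set with exactly two vertices, and every part must actually participate in a coalition. I would record this as a preliminary lemma, together with the remark that if a $2$-fair coalition partition has at least three parts, then no coalition can have union $V$ (that would force only two parts), so every coalition has union of size exactly $4$.

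For $G_1$ (the triangular prism, with triangles $v_1v_2v_3$ and $v_4v_5v_6$ and rungs $v_1v_4,v_2v_5,v_3v_6$) the bound $C_{2f}(G_1)\le 4$ is the lemma on cubic graphs. For the reverse bound I would exhibit the partition $\pi=\{\{v_4,v_5,v_6\},\{v_1\},\{v_2\},\{v_3\}\}$: the set $\{v_4,v_5,v_6\}$ is not $2$FD because its complement is the triangle $v_1v_2v_3$, which is $2$-regular; the singletons are trivially not $2$FD; and for each $i\in\{1,2,3\}$ the union $\{v_4,v_5,v_6\}\cup\{v_i\}$ is $V$ minus an edge of the triangle $v_1v_2v_3$, hence a $2$FD set by the lemma. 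So each of the four parts forms a $2$-fair coalition with $\{v_4,v_5,v_6\}$, giving $C_{2f}(G_1)=4$.

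For $G_2$, a direct check shows $G_2\cong K_{3,3}$ with parts $A=\{v_1,v_5,v_6\}$ and $B=\{v_2,v_3,v_4\}$; again $C_{2f}(G_2)\le 4$. Suppose for contradiction that a $2$-fair coalition partition $\pi$ has $|\pi|=4$; its part sizes are $3{+}1{+}1{+}1$ or $2{+}2{+}1{+}1$, and by the preliminary remark every coalition in $\pi$ has union of size exactly $4$. In the $2{+}2{+}1{+}1$ case the only pair of parts whose union has size $4$ is the pair of doubletons, so each singleton lies in no coalition, contradicting that every part must coalesce. In the $3{+}1{+}1{+}1$ case each singleton must coalesce with the $3$-element part $T$ (the only partner yielding union size $4$); hence $T\cup\{x\}$ is $2$FD for each of the three singleton parts $\{x\}$, and since $T\cup\{x\}=V\setminus\{y,z\}$ where $\{x\},\{y\},\{z\}$ are the three singletons, the lemma forces $\{y,z\}$ to be an edge. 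Applying this to all three singletons makes $V\setminus T$ a triangle, which is impossible because $K_{3,3}$ is triangle-free. Hence $C_{2f}(G_2)\le 3$. Finally, the perfect matching $\pi=\{\{v_1,v_2\},\{v_3,v_5\},\{v_4,v_6\}\}$ of $K_{3,3}$ is a valid $2$-fair coalition partition: each part is a $2$-element set (one vertex of $A$, one of $B$) and hence not $2$FD, while the union of any two parts is $V$ minus the remaining part, which is a pair with one vertex in $A$ and one in $B$, i.e.\ an edge, hence $2$FD. So $C_{2f}(G_2)=3$.

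The crux of the argument is the preliminary lemma — the exact list of $2$-fair dominating sets of a cubic graph of order $6$ — together with the "coalition unions have size $4$" reduction; after that, the proof is bookkeeping over two small partition shapes, and the single real dichotomy is that the prism contains a triangle while $K_{3,3}$ does not.
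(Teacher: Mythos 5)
Your argument is correct, but it takes a genuinely different route from the paper's. The paper works with the weaker quantitative fact that any $2$-fair dominating set of a cubic graph on $6$ vertices has at least $3$ vertices (its $k\geq 2n/5$ theorem), rules out partitions of sizes $6$ and $5$ by degree and common-neighbour arguments, and for $G_2$ eliminates size $4$ by ad hoc case analysis on whether the $2$-set is an edge and on the shape of the $3$-set. You instead prove the sharp structural statement that in a cubic graph of order $6$ the $2$-fair dominating sets are exactly $V$ and the sets $V\setminus e$ for edges $e$; after that, every case reduces to counting which pairs of parts have union of size $4$ with complement an edge, and the difference between the two graphs is isolated in the single fact that the prism contains a triangle while $K_{3,3}$ is triangle-free. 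This is cleaner and more systematic than the paper's argument, and your extremal partitions (the bottom triangle plus three singletons for $G_1$, a perfect matching for $G_2$) differ from but work just as well as the paper's.

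One loose end: you attribute the bound $C_{2f}(G_i)\leq 4$ to ``the lemma on cubic graphs.'' If you mean the paper's lemma that $3\leq C_{kf}(G)\leq 4$ for $k$-regular $G$, it does not apply here: for a cubic graph it controls $C_{3f}$, not $C_{2f}$. You should instead derive the bound from your own preliminary lemma, which takes one line: a partition into $6$ parts has all pairwise unions of size $2$, and one into $5$ parts has shape $2{+}1{+}1{+}1{+}1$ so all unions have size $2$ or $3$; in either case no union can have size $4$ or $6$, hence no part can lie in a coalition, whereas your lemma shows every part must. With that sentence added, the proof is complete.
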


\begin{proof}
	Since the number of vertices is $6$ and the degree of each vertex is less than $6-2=4$, we have $C_{2f}(G_i) \neq 6$ for $i \in \{1,2\}$.
	
	Now, suppose, for the sake of contradiction, that $\mathcal{X}$ is a $2$-fair coalition partition with $|\mathcal{X}| = 5$. Then $\mathcal{X}$ consists of one set $A$ of size $2$ and four singleton sets. Furthermore, by Theorem \ref{thm:4-20}, any $2$-fair dominating set must have at least $3$ vertices. Therefore, all four singleton sets must form a coalition with $A$.
	\begin{enumerate}
		\item[(i)]
	 Consider an arbitrary pair of vertices in $A$. There exists a vertex $v_0$ that is adjacent to both vertices in $A$. Let $v'$ be the other vertex adjacent to $v_0$. The singleton set $\{v'\}$ cannot form a coalition with $A$ because in $A \cup \{v'\}$, we have $|N[v_0] \cap (A \cup \{v'\})| = 3 \neq 2$. This contradicts the assumption that $|\mathcal{X}| = 5$.
	
	We now present a $2$-fair coalition partition of size $4$:
	\[
	\mathcal{X} = \big\{ \{v_1, v_2\}, \{v_3\}, \{v_4, v_5\}, \{v_6\} \big\}.
	\]
	It can be verified that this is a valid partition, confirming that $C_{2f}(G_1) = 4$.
		\item[(ii)]
	Consider the set $A$ of size $2$. If the two vertices in $A$ are not adjacent, a contradiction arises similarly to part (i). If they are adjacent, one can easily see that $A$ cannot form a coalition with an arbitrary singleton set $\{v'\}$, as the degree of some vertices in $A \cup \{v'\}$ would be $1$, violating the condition for a $2$-fair dominating set. Consequently, the assumption $|\mathcal{X}| = 5$ is impossible.
	
	Now, suppose $|\mathcal{X}| = 4$. If $\mathcal{X}$ contains a set of size $2$, then a singleton set must form a coalition with it, which, as argued above, is impossible. Therefore, $\mathcal{X}$ must consist of one set $B$ of size $3$ and three singleton sets. The vertices of $B$ must form a path (e.g., $v_1, v_3, v_5$). Then, the vertex $v_6$, which is adjacent to the middle vertex $v_3$, cannot form a coalition with $B$ because the degree of the remaining vertices in their union would be $3$, not $2$. Note that if the three vertices do not form a $P_3$, they would themselves constitute a $2$-fair dominating set, which contradicts the requirement for forming a coalition within a partition.
	
	We now present a $2$-fair coalition partition of size $3$:
	\[
	\mathcal{X} = \big\{ \{v_1, v_4\}, \{v_3, v_5\}, \{v_2, v_6\} \big\}.
	\]
	It can be verified that this is a valid partition, confirming that $C_{2f}(G_2) = 3$.\qed
	\end{enumerate} 
\end{proof}

\section{Conclusion}

	This work introduces the concept of $k$-fair coalition in graphs and explores fundamental properties of the $k$-fair coalition number. We establish that every graph $G$ admits a $k$-fair coalition partition and derive bounds for $C_{kf}(G)$. Applying these bounds allows us to compute exact $k$-fair coalition values for several well-structured graphs. We further characterize graphs exhibiting extreme $k$-fair coalition values.
	
	We propose the following open questions and promising research directions related to the $k$-fair coalition number:
	
	\begin{enumerate}
		\item[(i)] Determine exact values of the $k$-fair coalition number for fundamental graph classes (paths, cycles, trees, bipartite graphs) for various $k$.
		
		\item[(ii)] Establish Nordhaus–Gaddum type bounds on the sum and product of $k$-fair coalition numbers of a graph $G$ and its complement $\overline{G}$.
		
		\item[(iii)] Investigate the behavior of the $k$-fair coalition number under standard graph operations (corona, Cartesian product, join, lexicographic product).
		
		\item[(iv)] For any $k$-fair coalition partition $\Psi$ of $G$, define the corresponding $k$-fair coalition graph $kFCG(G, \Psi)$ with vertices representing partition blocks and edges indicating $k$-fair coalition relationships. Studying properties of these graphs offers rich opportunities for further investigation.
	\end{enumerate}

\end{document}